\newtheorem{thm}{Theorem}[subsection]
\newtheorem{prop}[thm]{Proposition}
\newtheorem{lem}[thm]{Lemma}
\newtheorem{cor}[thm]{Corollary}
\theoremstyle{definition}
\newtheorem{df}[thm]{Definition}
\newtheorem{exs}[thm]{Examples}
\let\ro\mathcal \let\go\mathfrak
\let\c@equation\c@theorem
\def\theequation{\thesubsection.\arabic{equation}}
\def\letswap#1#2{\let\@tmpa#1\let#1#2\let#2\@tmpa}
\def\makecompact#1{\g@addto@macro#1{%
  \setlength{\itemsep}{\z@}\setlength{\parsep}{\z@}%
  \setlength{\topsep}{\z@}\setlength{\partopsep}{\z@}%
}}
\def\theequation{\@arabic\c@section.\@arabic\c@subsection.\@arabic\c@thm}
\def\endequation{\eqno \hbox{\@eqnnum}$$\@ignoretrue}
\DeclareMathOperator\Sp{Sp}
\DeclareMathOperator\Fil{Fil}
\DeclareMathOperator\Hom{Hom}
\DeclareMathOperator\Ext{Ext}
\DeclareMathOperator\Ker{Ker}
\DeclareMathOperator\Frac{Frac}
\DeclareMathOperator\haut{ht}
\def\O{\ro O}
\def\Qp{{ℚ_p}}
\def\Qph{{ℚ_{p^h}}}
\def\cris{_{\mathrm{cris}}}
\def\Fp{{\mathbb{F}_p}}
\def\Fph{{\mathbb{F}_{p^h}}}
\def\Fpb{{\overline{\mathbb{F}_p}}}
\def\Cp{{ℂ_p}}
\def\trans#1{{}^{\mathrm{t}}#1}
\def\dual#1{#1^{\smash{\scalebox{.7}[1.4]{%
  \rotatebox{90}{\textnormal{\guilsinglleft}}}}}}
\def\defdelim#1#2#3{\def#1##1{{\left#2##1\right#3}}}
\defdelim\cro[]
\def\application#1#2#3#4{\begin{array}{rcl}%
  \displaystyle#1&\longrightarrow&\displaystyle #2\\%
  \displaystyle#3&\longmapsto&\displaystyle #4\\\end{array}}
\def\@ifdisplay#1#2{\mathchoice{#1}{#2}{#2}{#2}}%
\def\@matharrow#1#2#3{%
  \let\@matharr@short@#1 \let\@matharr@long@#2 \def\@matharr@x@{#3}%
  \let\@matharr@up@\relax \let\@matharr@down@\relax
\@matharr@step}
\def\@matharr@step{\futurelet\@matharr@what \@matharr@}
\def\@matharr@{\let\next\@matharr@do
  \ifx ^\@matharr@what\let\next\@matharr@up \fi
  \ifx _\@matharr@what\let\next\@matharr@down \fi
\next}
\def\@matharr@up^#1{\def\@matharr@up@{#1}\@matharr@step}
\def\@matharr@down_#1{\def\@matharr@down@{#1}\@matharr@step}
\def\@matharr@do{%
  \let\@matharr@do@\@matharr@x
  \ifx\@matharr@up@\relax
    \ifx\@matharr@down@\relax
      \def\@matharr@do@{%
        \@ifdisplay{\@matharr@long@}{\@matharr@short@}}%
    \fi
  \fi
\@matharr@do@}
\def\@matharr@x{%
  \@ifdisplay
  {\@matharr@x@[\mkern 8mu\@matharr@down@\mkern 8mu]%
    {\mkern 8mu\@matharr@up@\mkern 8mu}}%
  {\@matharr@x@[\mkern 12mu\@matharr@down@\mkern 12mu]%
    {\mkern 12mu\@matharr@up@\mkern 12mu}}%
}
\def\longhookrightarrow{\DOTSB\lhook\protect\relbar\protect\joinrel\rightarrow}
\def\hookrightarrowfill@{\arrowfill@{\lhook\mkern 3mu}\relbar\rightarrow}
\renewcommand{\xhookrightarrow}[2][]{\ext@arrow 0399\hookrightarrowfill@{#1}{#2}}
\def\autorightarrow{\@matharrow\rightarrow\longrightarrow\xrightarrow}%
\def\automapsto{\@matharrow\mapsto\longmapsto\xmapsto}%
\def\autohookrightarrow{\@matharrow\hookrightarrow\longhookrightarrow
  \xhookrightarrow}%
\newcommand\xtwoheadrightarrow[2][]{\ext@arrow 0399%
  {\arrowfill@\relbar\relbar\twoheadrightarrow}{#1}{#2}}
\def\autotwoheadrightarrow{\@matharrow\twoheadrightarrow
  \twoheadrightarrow\xtwoheadrightarrow}
\begin{document}
\makecompact\itemize
\makecompact\enumerate

\title{Analytic $p$-adic Banach spaces and the fundamental lemma of
Colmez and Fontaine}
\date{December 1, 2010}
\author{Jérôme Plût}
\maketitle

\begin{abstract}
This article gives a new proof of the fundamental lemma of the ``weakly
admissible implies admissible'' theorem of Colmez-Fontaine that describes
the semi-stable $p$-adic representations. To this end, we introduce the
category of spectral Banach spaces, which are $p$-adic Banach spaces with
a $\Cp$-algebra of analytic functions, and the subcategory of effective
Banach-Colmez spaces. The fundamental lemma states the surjectivity of
certain analytic maps and describes their kernel. It is proven by an
explicit count of solutions of the equations defining these maps. It is
equivalent to the existence of functions of dimension and height of
effective Banach-Colmez spaces.
\end{abstract}
\section*{Introduction}

The ``weakly admissible implies admissible''
theorem of Colmez and Fontaine~\cite[Théorème~A]{CF2000} states that there
exists an equivalence of categories between the semi-stable $p$-adic
representations and an explicitly described category of filtered
$(φ,N)$-modules.

This theorem rests on a ``fundamental lemma''~\cite[2.1]{CF2000}, of an
analytic nature: let~$U = \acco {x ∈ B^+\cris, φ(x) = px}$, and for~$h ≥
1$, let~$Y = U^h ×_{ℂ_p^h} \Cp$, where~$\Cp → ℂ_p^h$ is any $\Cp$-linear
map, and~$U^h → ℂ_p^h$ is the restriction of the reduction map $θ:
B^+\cris → \Cp$. The fundamental lemma states that any map~$f: Y → \Cp$
of the form~$f(u_1,…,u_h) = u_1v_1 + … + u_h v_h$ (for any~$v_i ∈ B\cris$ such
that~$f$ does map $Y$ to~$\Cp$) is either surjective, or has a
$\Qp$-finite-dimensional image.

This lemma was later improved by Colmez~\cite[6.11]{Colmez2002EBDF}, who proved
that in the case where the map~$f$ is surjective, its kernel has
dimension~$h$ over~$\Qp$. His proof uses sheaves of vector spaces over a
suitable category of $\Cp$-Banach algebras.

\bigbreak

This article gives an independent proof of the strong version of the
fundamental lemma. To this aim, we introduce the new category of spectral
Banach spaces and we see the $\Qp$-vector spaces spaces and linear maps
involved in the theorem as objects and morphisms of this category. The
fundamental lemma is then closely related to the structure of the
subcategory of effective Banach-Colmez spaces. The spectral Banach spaces
are also interesting objects by themselves since, for example, they give
a framework for the Fargues-Fontaine theory~\cite[§8]{FF2011Courbes}.

The spectral Banach spaces are Banach spaces, plus an extra analytic
structure provided by a $\Cp$-Banach algebra of analytic functions. This
category naturally contains all finite-dimensional vector spaces
over~$\Qp$ or~$\Cp$. The objects of the full subcategory of effective
Banach-Colmez spaces are the extensions of finite-dimensional
$\Cp$-vector spaces by finite-dimensional $\Qp$-vector spaces. This is
for example the case of the spaces
\begin{equation}
E_{d,h} = \acco { x ∈ B^+\cris, φ^h(x) = p^d x}
\quad \text{for~$0 ≤ d ≤ h$.}
\end{equation}
These objects are fundamental to the proof of the fundamental lemma, as
it reduces to the two following facts:
\begin{itemize}
\item any effective Banach-Colmez space of dimension one is isomorphic to
the direct sum of~$E_{1,h}$ and a finite-dimensional $\Qp$-vector space;
\item for all~$f_0,…,f_{h-1} ∈ \Cp$, the map~$E_{1,h} → \Cp, x ↦ f_0
θ(x)+…f_{h-1} θ(φ^{h-1}(x))$ (where~$θ: B^+\cris → \Cp$ is the reduction
morphism), is either zero, or surjective with a kernel of dimension~$h$
over~$\Qp$.
\end{itemize}
The fundamental lemma is also interpreted as the existence of natural
functions of dimension and height on the category of effective
Banach-Colmez spaces.

\bigbreak

The first part of this article describes the category of spectral Banach
spaces and explains the analytic structure on some usual objects of
$p$-adic Hodge theory. The second part proves the fundamental lemma. It
first establishes the main properties of the objects~$E_{d,h}$, then the
structure theorem of effective Banach-Colmez spaces and the fundamental
lemma for~$E_{1,h}$ as described above.

\bigbreak

This is part of a work accomplished during a PhD thesis at
Université Paris-Sud 11 (Orsay)
under the supervision of Pr. Jean-Marc Fontaine.

\subsection*{Notations}

Throughout this document, we use the following notations
from~\cite{Fontaine1994Corps}:
$p$~is a prime number,
$ℚ_p$~is the field of $p$-adic integers,
and $ℂ_p$~is the completed algebraic closure of~$ℚ_p$.
For any integer~$h$, $\Qph$~is the unique unramified extension of~$ℚ_p$
of degree~$h$.
We write~$φ$ for the absolute Frobenius automorphism
and $[·]$ for the Teichmüller lift.
We define~$ℤ_p(1)$ as the Tate module of the multiplicative group~$ℂ_p^×$:
its elements are families~$(ε_n)$ such that~$ε_{n+1}^p = ε_n$ and~$ε_0=1$.

For any $p$-adic ring~$B$, $\ro O_B$ is the ring of integers of~$B$.
The multiplication map by~$p$ on~$\ro O_{ℂ_p}/p$ is a ring homomorphism;
we define the ring~$R$ as the projective limit of $\ro O_{ℂ_p}$
for this map.
The projection map~$R → \ro O_{ℂ_p}/p$
extends to a natural map~$θ: W(R) → \ro O_{ℂ_p}$,
where $W(R)$~is the ring of Witt vectors with coefficients in~$R$.
Let~$A\cris$ be the $p$-adic completion of
the divided power hull of~$W(R)$,
relative to the canonical divided powers on the ideal~$\ker θ$;
also define~$B^+\cris = A\cris[\frac 1p]$.
It is a discrete valuation ring, with quotient ring~$ℂ_p$
and maximal ideal generated by an element~$t$ such that~$φ(t) = p· t$.

For any two integers~$d ≥ 0, h ≥ 1$, we define the \emph{slope space}
\begin{equation}
E_{d,h} = \ker \pa{φ^h-p^d: B^+\cris → B^+\cris}.
\end{equation}
For any non-negative rational~$α = \frac{d}{h}$ with $d, h$ coprime,
we also write~$E_{α} = E_{d,h}$; in particular, $E_{d} = E_{d,1}$.
The slope spaces, and in particular~$E_{1,h}$,
play a central role in the proofs given below.
These spaces also appear in~\cite{FF2011Courbes} under the name~$B_E^{φ^{h}=p^d}$.
They are the graded quotients of the structure ring of
the Fargues-Fontaine curve~\cite[9.1,10.1]{FF2011Courbes}.

\section{Spectral Banach spaces}

For the convenience of the reader, we give a self-contained presentation
of spectral Banach spaces. Some results are only stated; complete proofs
can be found in~\cite{These}.
\subsection{Spectral affine varieties}

Let~$p$ be a prime number. Let~$ℚ_p$ be the field of $p$-adic numbers,
$ℂ_p$ be the completion of the algebraic closure of~$ℚ_p$, and~$\O_\Cp$
be the ring of integers of~$\Cp$.

We consider the categories of Banach spaces (and algebras) over~$ℚ_p$ as
topological spaces, that is, up to equivalence of norm. A \emph{lattice}
of a Banach space~$E$ over~$ℚ_p$ is a closed subgroup~$\ro E$ such
that the canonical map~$\ro E ⊗_{ℤ_p} ℚ_p → E$ is an isomorphism.

\begin{df}
Let~$A$ be a topological $\Cp$-algebra. The \emph{spectrum}~$\Sp A$ of~$A$
is the set of all continuous $\Cp$-algebra morphisms from~$A$ to~$\Cp$.
\end{df}

By the Gelfand transform, any element~$a$ of a topological
$\Cp$-algebra~$A$ may be seen as a function on~$S = \Sp A$ by
defining~$a(s) = s(a) ∈ \Cp$ for~$s ∈ \Sp A$. The spectrum is endowed
with the \emph{weak topology}, \emph{i.e.} the coarsest topology for
which all elements of~$a$ are continuous on~$S$.

To each open set~$Ω ⊂ S$ and each~$a ∈ A$, we attach the semi-norm
on~$A$ defined by
\begin{equation}
\norm{a}_{Ω} = \sup \acco {\abs{a(s)}, s ∈ Ω} ∈ [0, + ∞].
\end{equation}
The set~$Ω$ is \emph{bounded} if there exists an open set~$\ro A ⊂ A$ and
a constant~$M < + ∞$ such that~$\norm{a}_{Ω} < M$ for all~$a ∈
\ro A$.

\begin{df}
A topological $\Cp$-algebra~$A$ is \emph{pro-spectral} if $\Sp A$~is
non-empty, $\Sp A$~is the reunion of all bounded open sets~$Ω$, and the topology
on~$A$ is defined by the family of semi-norms~$\norm{·}_{Ω}$. It is
\emph{spectral} if $S = \Sp A$~is bounded and $\norm{·}_{S}$ is a norm defining
the topology on~$A$.
\end{df}

If $A$~is a pro-spectral $\Cp$-algebra, then the natural morphism
from~$A$ to the algebra of continuous functions on~$\Sp A$ is injective.
In particular, $A$~is reduced. Conversely, if $A$~is reduced and
topologically of finite type over~$\Cp$, then it is
spectral~\cite[3.4.9]{FVdP2004}.

For example, the set~$\Cp \acco X$ of formal series with radius of
convergence~$≥ 1$ is a spectral algebra, with spectrum homeomorphic to
the closed ball~$\O_\Cp$~\cite[II.4.4]{FVdP2004}. The set of formal series with
infinite radius of convergence is a pro-spectral algebra, with spectrum
homeomorphic to~$\Cp$.

\begin{df}
The category of \emph{affine spectral varieties} (over~$\Cp$) is the
opposite category to that of pro-spectral algebras (and continuous
$\Cp$-algebra morphisms).
\end{df}

We shall only consider affine spectral varieties, and consequently omit
the word ``affine''.

The forgetful functor from the category of spectral varieties to that of
topological spaces is faithful; therefore, we may see a spectral variety
as a topological space with some extra structure. A morphism of spectral
varieties will also be called \emph{analytic}. For any pro-spectral
$\Cp$-algebra~$A$, we write~$\Sp A$ for the spectral variety attached
to~$A$.

\bigbreak

Let~$S = \Sp A$ be a spectral variety. We say that $S$~is
\begin{itemize}
\item \emph{bounded} if $A$~is spectral;
\item \emph{étale} if $S$~is a locally profinite topological
space, and~$A = \ro C^0 (S, \Cp)$;
\item \emph{connected} if $A$~has no idempotent elements apart
from zero and one;
\item \emph{rigid} if $A$~is topologically of finite type
over~$\Cp$.
\end{itemize}

There is a natural functor from the category of locally profinite
topological spaces to that of spectral varieties. It is fully faithful
and left-adjoint to the forgetful functor: all continuous applications
from an étale spectral variety to any spectral variety are
analytic~\cite[1.4.4]{These}. The only analytic morphisms from a
connected spectral variety to an étale variety are the constant
maps~\cite[1.6.10]{These}.

A morphism of spectral varieties is \emph{surjective} if the underlying
continuous function is surjective. This implies that the corresponding
$\Cp$-algebra morphism is an (always injective) isometry for the spectral
norm~\cite[1.6.1]{These}.

The category of spectral varieties has projective limits for families of
surjective morphisms indexed by~$ℕ$, corresponding to completed unions of
pro-spectral algebras; it also has finite fibre products, corresponding to
completed tensor products of pro-spectral
algebras~\cite[1.6.5,1.6.7]{These}. These constructions are compatible
with the underlying topological spaces.
\subsection{Spectral groups and Banach spaces}

\begin{df}
An (affine, commutative) \emph{spectral group} is a group object in the
category of affine spectral varieties.
\end{df}

Since all groups considered will be affine and commutative, these
adjectives will henceforth be omitted.

The spectral groups inherit from the spectral varieties the existence of
finite fibre products and projective limits of countable surjective
morphisms. Since finite fibre products exist, this category has
kernels. We say that a short sequence of spectral groups is \emph{exact}
if the underlying group sequence is exact.

A \emph{prorigid} spectral group is a projective limit of a countable
family of rigid spectral groups and surjective spectral group morphisms.
Prorigid spectral groups admit the following two properties
(\cite[3.4.4 and 3.4.9]{These}):

\begin{prop}[Global inversion]
Let~$f: G → H$ be a morphism
of prorigid spectral groups. If $f$~is bijective, then it is an
(analytic) isomorphism.
\end{prop}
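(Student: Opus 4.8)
The plan is to produce the analytic inverse in two stages: first show that the set-theoretic inverse $g=f^{-1}$ is continuous, and then show that it is analytic. Write $G=\Sp A$ and $H=\Sp B$, so that $f$ corresponds to a continuous $\Cp$-algebra morphism $f^*\colon B→A$. Since $f$ is bijective it is in particular surjective, so $f^*$ is already an isometric injection for the spectral norm; its image is therefore a closed subalgebra of $A$, and the whole problem reduces to showing that $g$ is a morphism of spectral varieties (equivalently, that $f^*$ is onto: for $a∈A$ the continuous function $a∘g$ on $H$ would then lie in $B$, and $f^*(a∘g)=a$).

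\emph{Continuity of the inverse.} A prorigid spectral group is a countable projective limit of rigid groups with surjective transition maps, and these limits are compatible with the underlying topological spaces. A rigid algebra is topologically of finite type, so the $\Cp$-points of a rigid group embed as a closed subset of some $\O_\Cp^n$; since $\Cp$ is a separable complete metric space, such a closed subset is Polish, and a countable projective limit of Polish groups (a closed subgroup of a countable product) is again Polish. Thus $f$ is a continuous bijective homomorphism of Polish groups, and by the open mapping theorem for Polish groups (Pettis, via Baire category) it is open; hence $g$ is continuous.

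\emph{Analyticity of the inverse.} I would reduce to a local statement at the identity $e∈G$. The group law being analytic, translation by any point is an analytic automorphism, so it suffices to invert $f$ analytically on a neighbourhood of $f(e)$: translating then gives analytic local inverses near every point, which by uniqueness agree with the continuous global map $g$, so that $g$ is everywhere locally analytic, hence analytic. To invert $f$ near $e$ I would apply the (rigid, Banach-space) inverse function theorem, whose hypothesis is that the tangent map $df_e\colon T_eG→T_eH$ be an isomorphism. Here the standing hypotheses enter decisively. Because finite fibre products, and hence kernels, exist, and because a one-point spectral variety $\Sp A'$ has $A'$ injecting into the continuous functions on a point, namely $\Cp$, so that $A'=\Cp$, the bijectivity of $f$ forces $\Ker f$ to be the \emph{trivial} spectral group, not merely trivial on points. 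As spectral algebras are reduced and $\Cp$ has characteristic zero, this reduced triviality computes $\ker(df_e)=T_e(\Ker f)=0$, so $df_e$ is injective; surjectivity of $f$ together with the open mapping theorem (smoothness of surjective homomorphisms in characteristic zero) gives that $df_e$ is onto.

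The main obstacle is precisely this analyticity step, and within it the passage between the prorigid (Fréchet) level and the rigid finite-type level. Two points need care. First, the given morphism $f$ of projective limits must be realized as a compatible system of morphisms of rigid pieces: since each rigid algebra $B_m$ is topologically of finite type, $f^*(B_m)$ lands in some $A_{n(m)}$, which yields rigid morphisms $G_{n(m)}→H_m$ whose limit is $f$, making tangent spaces and the inverse function theorem available at each finite stage (with $T_eG=\varprojlim T_eG_n$, etc.). Second, the identification $\ker(df_e)=T_e(\Ker f)$ genuinely uses characteristic zero: it is exactly the statement that rules out the analogue of an inseparable isogeny, where the reduced kernel is trivial yet $df_e$ degenerates. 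Once $df_e$ is shown to be an isomorphism, the local analytic inverses assemble, by uniqueness against the continuous global inverse $g$ furnished by the open mapping theorem, into the desired analytic isomorphism $H→G$.
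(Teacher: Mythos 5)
First, a point of comparison: the paper does not actually prove this proposition --- it is quoted from the author's thesis (\cite[3.4.4]{These}) --- so there is no internal proof to measure yours against. Your first half (continuity of the set-theoretic inverse via the Baire/Pettis open mapping theorem, after observing that a countable projective limit of affinoid spectra with their weak topologies is a Polish group) is a reasonable reduction, and you correctly isolate the real content as the surjectivity of $f^*\colon B \to A$, i.e.\ the analyticity of $g$. Note that for reduced rigid spaces without group structure this genuinely fails (the normalization of a cuspidal cubic is a bijective morphism of reduced affinoid varieties whose continuous inverse is not analytic), so the group structure must enter essentially.

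The gap is in the second half: the inverse function theorem is not available for the objects this proposition is designed for. The prorigid groups of interest here ($R$, $BW(R)$, the slope spaces $E_{d,h} \cong \go m_R^d$) are perfectoid-type objects whose algebras of analytic functions are completed perfections such as $\Cp\acco{X^{1/p^\infty}}$; there every function vanishing at the origin is topologically a $p$-th power of another such function, so $\go m = \overline{\go m^2}$ and the tangent space at every point is \emph{zero}. Equivalently, in the limit picture $T_eG = \varprojlim T_eG_n$ the transition maps (powers of Frobenius) have vanishing differential; likewise every étale group such as $\Qp^h$ has zero tangent space. Hence ``$df_e$ is an isomorphism'' is vacuously true for essentially every morphism in this theory (an isomorphism of zero spaces) and carries no information; no rigid inverse function theorem applies to such non-smooth spaces, and a local analytic inverse cannot be extracted this way. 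The auxiliary claims fail for the same reason: $\ker(df_e) = T_e(\Ker f)$ and ``surjective homomorphisms are smooth in characteristic zero'' are Cartier-type statements about smooth finite-type groups, not about these infinite-dimensional perfections; and the induced stage maps $G_{n(m)} \to H_m$ are in general not bijective (for the identity of $R$ they are $p^k$-power maps), so the theorem cannot even be applied stage-wise. A workable argument must exploit the group structure differently, e.g.\ by deducing surjectivity of the isometric embedding $f^*$ from a completeness or open-mapping argument at the level of the Banach algebras themselves, which is presumably what the thesis reference does.
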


\begin{prop}[Connected-étale sequence]
Let~$G$ be a prorigid spectral group.
Then $G$~admits a biggest étale quotient~$π_0(G)$ and a biggest connected
subgroup~$G^0$, and there exists a canonical exact sequence
\[ 0 → G^0 → G → π_0(G) → 0. \]
\end{prop}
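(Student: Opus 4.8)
The plan is to reduce everything to the rigid case by means of the prorigid presentation $G = \varprojlim_n G_n$, where the $G_n$ are rigid and the transition maps $G_{n+1} \to G_n$ are surjective group morphisms, and then to pass to the limit. Both constructions ($\pi_0(G)$ and $G^0$) are visibly functorial, so the heart of the matter is to treat a single rigid group and to check that the relevant countable surjective limits remain inside the category.

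First I would settle the rigid case. If $G$ is rigid, its algebra $A$ is reduced (every pro-spectral algebra is) and topologically of finite type over $\Cp$, hence Noetherian with only finitely many idempotents, so $S = \Sp A$ has finitely many connected components. The closed subalgebra $A^{\mathrm{et}} \subseteq A$ generated by the idempotents is then $\ro C^0(\pi_0(S), \Cp)$ for the finite set $\pi_0(S)$, and its inclusion into $A$ gives a morphism $G \to \pi_0(G)$ onto a finite étale variety. Because the product of two connected varieties is connected, $\pi_0(S \times S) = \pi_0(S) \times \pi_0(S)$, i.e. the étale part of $A \widehat{\otimes} A$ is $A^{\mathrm{et}} \widehat{\otimes} A^{\mathrm{et}}$; hence the comultiplication restricts to $A^{\mathrm{et}}$ and makes $\pi_0(G)$ a finite étale group with $G \to \pi_0(G)$ a group morphism. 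I would then set $G^0 = \ker(G \to \pi_0(G))$, the component of the identity; it is connected (a single component has no nontrivial idempotents), and the sequence $0 \to G^0 \to G \to \pi_0(G) \to 0$ is exact because the map is surjective on points with kernel exactly $G^0$.

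Next I would pass to the limit. Surjectivity of $G_{n+1} \to G_n$ yields surjections $\pi_0(G_{n+1}) \to \pi_0(G_n)$ and $G_{n+1}^0 \to G_n^0$, so I may form $\pi_0(G) = \varprojlim_n \pi_0(G_n)$, a projective limit of finite groups whose algebra is the algebra of continuous functions on this profinite space, hence an étale spectral group; and $G^0 = \varprojlim_n G_n^0$, a countable surjective limit of connected rigid groups. The latter is connected because any idempotent of its algebra already appears at some finite stage, where it is trivial. Both limits exist since the category admits projective limits of countable families of surjective morphisms. Exactness of $0 \to G^0 \to G \to \pi_0(G) \to 0$ then follows by taking the limit of the exact sequences for the $G_n$, the surjectivity of the transition maps supplying the Mittag-Leffler condition. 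Finally I would verify universality: any morphism from $G$ to an étale group kills $G^0$ (a morphism from the connected $G^0$ to an étale variety is constant, hence trivial), so it factors through $\pi_0(G)$, which is thus the biggest étale quotient; dually, any connected subgroup $H \hookrightarrow G$ maps to a point of $\pi_0(G)$ and, containing the identity, lands in $G^0$, so $G^0$ is the biggest connected subgroup.

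I expect the main obstacle to be the analytic bookkeeping hidden in the word ``commutes'': namely that, at each finite level and in the limit, the étale part of a completed tensor product is the completed tensor product of the étale parts, so that $\pi_0$ really is a functor to groups and the comultiplication descends; and, symmetrically, that $G^0 = \varprojlim_n G_n^0$ is genuinely connected and prorigid rather than a mere set-theoretic kernel. Controlling idempotents of completed tensor products and of projective limits, and confirming that $G \to \pi_0(G)$ is a surjection of prorigid groups so that Global inversion and the exactness convention apply, is where the real work lies.
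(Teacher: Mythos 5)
The paper does not actually prove this proposition: it is one of the results announced in Section~1 as ``only stated'', with the proof deferred to the author's thesis (cited as [These, 3.4.4 and 3.4.9]), so there is no argument in the text to compare yours against. That said, your outline is the natural route and very plausibly the intended one: treat the rigid case by noting that a reduced, topologically finite type $\Cp$-algebra has finitely many idempotents, that the subalgebra they generate is a Hopf subalgebra because products of connected rigid varieties over the algebraically closed field $\Cp$ are connected, and then pass to the countable surjective limit. Two points carry essentially all of the content. The first, the Künneth-type identity $\pi_0(S\times S)=\pi_0(S)\times\pi_0(S)$ for completed tensor products, you correctly single out as the real work. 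The second you assert as automatic but it is not: the claim that surjectivity of $G_{n+1}\to G_n$ ``yields'' surjectivity of $G_{n+1}^0\to G_n^0$. What is formal is only that $f(G_{n+1}^0)$ is a connected subgroup of finite index in $G_n^0$ (since $G_{n+1}$ is a finite disjoint union of cosets of $G_{n+1}^0$); to conclude equality one must rule out a partition of the connected group $G_n^0$ into finitely many proper cosets, which requires knowing that these cosets are closed, i.e.\ some closed-image statement for surjective morphisms of affinoid groups. This surjectivity cannot be skipped, because the category only provides projective limits along surjective systems, so both the existence of $G^0=\varprojlim G_n^0$ as a prorigid group and your Mittag-Leffler exactness argument depend on it. With those two lemmas supplied, the remaining steps (universality of $\pi_0(G)$ and maximality of $G^0$, via the constancy of analytic maps from connected to étale varieties) follow from facts the paper does state.
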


\medbreak

\begin{df}
An \emph{effective spectral Banach space} is a spectral group~$E$ that is
a Banach space and such that the multiplication map~$\pa{× \frac 1p}: E →
E$ is analytic.
\end{df}

If $E$~is an effective spectral Banach space, then any lattice~$\ro E$
of~$E$ is a spectral group.
An effective spectral Banach space~$E$ is~\emph{prorigid} if there exists
a lattice~$\ro E ⊂ E$ that is prorigid as a spectral group. In this case, in
view of the global inversion theorem, the condition of analyticity of the
multiplication by~$\frac 1p$ is automatically satisfied. Any prorigid
Banach space admits a connected-étale sequence; moreover, this sequence
splits (non-canonically)~\cite[3.5.5]{These}.

\begin{exs}\label{ex:an}
Some examples of effective spectral Banach spaces and spectral groups
are:\penalty 1000
\begin{enumerate}
\item The étale effective spectral Banach spaces are exactly the
finite-dimensional $ℚ_p$-vector spaces~$V$, as well as the set~$c_0(ℚ_p)$
of all convergent sequences~\cite[2.8.2/2]{BGR1984}.
\item Any
finite-dimensional $\Cp$-vector space~$L = ℂ_p^d$ has a canonical structure
as a (connected, prorigid) spectral Banach space, where the analytic
functions are the everywhere convergent formal series in
$d$~variables~\cite[3.3.3]{These}.
\item Since multiplication by~$p$ on~$\O_\Cp$
is analytic and surjective, the projective limit~$R$ is a spectral group.
Its ring structure and the canonical ring morphism~$R → \O_{ℂ_p}/p$
are analytic~\cite[4.2.1]{These}.
\item The ring of Witt vectors~$W(R)$ is spectral, as
well as the $W(R)$-module of Witt bivectors
\begin{equation}
BW(R) = \acco { (x_{n})_{n ∈ ℤ}, \lim \inf_{- ∞} v_R(x_n) > 0 }.
\end{equation}
Moreover, the ring morphism~$θ: BW(R) → \Cp$ and Frobenius $φ: BW(R) → BW(R)$
are analytic~\cite[4.3.4]{These}.
\end{enumerate}
\end{exs}

The ring~$B^+\cris$ does not have a (canonical) analytic structure.
However, there exists a canonical injection~$η: BW(R) ↪ B^+\cris$,
which maps the bivector~$(x_{n})_{n ∈ ℤ}$ to~$∑ p^{-n} x_n^{p^{n}}$;
this map is easily seen to be~$W(R)$-linear and continuous.

\goodbreak
\def\x{\cite[4.4.3]{These}}
\begin{prop}[\x]\label{prop:Edh}
Let~$d ≤ h$ be two integers such that~$0 ≤ d ≤ h$, $h ≥ 1$.
\begin{enumerate}
\item 
$E_{d,h}$~is the set of all bivectors~$(x_n)_{n ∈ ℤ} ∈
BW(R)$ satisfying the periodicity condition~$x_{n-d} = x_n^{p^{h-d}}$
for all~$n ∈ ℤ$.
\item The analytic structure given by this homeomorphism
between~$E_{d,h}$ and~$\go m_R^d$
makes~$E_{d,h}$ an effective spectral Banach subspace of~$BW(R)$.
\item The Frobenius morphism~$φ: E_{d,h} → E_{d,h}$ and the reduction
morphism~$θ: E_{d,h} → \Cp$ are analytic.
\item For any~$c ∈ ℚ_{p^h}$, the multiplication map by~$c$ on~$E_{d,h}$
is analytic;
for any~$d + d' ≤ h$,
the multiplication map~$E_{d,h} × E_{d',h} → E_{d+d',h}$ is analytic.
\end{enumerate}
\end{prop}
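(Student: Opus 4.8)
The plan is to transport every assertion into Witt bivector coordinates through the embedding $\eta\colon BW(R)\hookrightarrow B^+\cris$ and to read each one off from the explicit action of $\phi$ and of multiplication by $p$ on the entries $(x_n)_{n\in\mathbb Z}$. For (i), I would first record that on the image of $\eta$ each of $\phi$ and multiplication by $p$ is the composite of an index shift with a $p$-power map on the entries $x_n\in R$. Writing out $\phi^h\eta((x_n))$ and $p^d\eta((x_n))$ and matching coefficients in the (unique) bivector expansion turns the identity $\phi^h\xi=p^d\xi$ into a system of equalities between $p$-power expressions in the $x_n$; since $R$ is perfect these are solved by extracting $p$-power roots, and they are equivalent to the periodicity relation of the statement. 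One inclusion is then immediate: a bivector satisfying the relation lies in $BW(R)$ because the relation forces $v_R(x_n)\to+\infty$ as $n\to-\infty$, so $\liminf_{-\infty}v_R(x_n)>0$, and its image is killed by $\phi^h-p^d$. The reverse inclusion---that every element of $\ker(\phi^h-p^d)\subset B^+\cris$ actually lies in $\eta(BW(R))$---is the delicate point of (i); I would obtain it from the description of the image of $\eta$ together with the fact that an eigenvector for $\phi^h$ with eigenvalue $p^d$ has bivector coordinates forced to satisfy the relation, so that no kernel element can escape $\eta(BW(R))$.

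For (ii), the periodicity relation shows that a bivector in $E_{d,h}$ is determined by its $d$ entries $x_0,\dots,x_{d-1}$, one per residue class modulo $d$, freely chosen in $\mathfrak m_R$; this gives the announced homeomorphism $E_{d,h}\simeq\mathfrak m_R^{\,d}$. Since $R$ (hence $\mathfrak m_R$) is a spectral group and $BW(R)$ is a spectral group with analytic addition by Examples~\ref{ex:an}, I would check that $E_{d,h}$ is the spectral subgroup of $BW(R)$ cut out by the (analytic) relations of (i), so that its group law is the restriction of the analytic addition of $BW(R)$ and $E_{d,h}$ is an effective spectral Banach subspace. As $\mathfrak m_R^{\,d}$ is prorigid, the analyticity of multiplication by $\tfrac1p$ is then automatic by the global inversion theorem; concretely this map is an index shift followed by the root map $x\mapsto x^{1/p}$, which is analytic as the inverse of the analytic bijection $x\mapsto x^p$.

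For (iii), in coordinates $\phi$ is the entrywise $p$-power map on $R$, which is analytic, so $\phi\colon E_{d,h}\to E_{d,h}$ is analytic; and $\theta\colon E_{d,h}\to\Cp$ is the restriction of the analytic reduction morphism $\theta\colon BW(R)\to\Cp$ of Examples~\ref{ex:an}, hence analytic.

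For (iv), the two assertions are instances of a single one: since $E_{0,h}=\ker(\phi^h-1)$ is the unramified field $\Qph$, multiplication by $c\in\Qph$ is the case $d'=0$ of the product map. That the product of $\xi\in E_{d,h}$ and $\xi'\in E_{d',h}$ lands in $E_{d+d',h}$ is immediate from $\phi^h(\xi\xi')=p^{d}\xi\,p^{d'}\xi'=p^{d+d'}\xi\xi'$. The content is analyticity: I would express the bivector coordinates of the product $\xi\xi'$ in $B^+\cris$ as functions of the coordinates of $\xi$ and $\xi'$ and check that the induced map $\mathfrak m_R^{\,d}\times\mathfrak m_R^{\,d'}\to\mathfrak m_R^{\,d+d'}$ is given by everywhere convergent series over $\mathfrak m_R$. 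I expect this to be the main obstacle, since the multiplication of $B^+\cris$ is not the naive bivector operation, and producing the coordinate formula for the product together with the bookkeeping that it converges and respects the $\liminf$ condition is the one genuinely computational part of the proof.
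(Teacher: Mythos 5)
A preliminary remark: the paper itself gives no proof of this proposition --- it is imported wholesale from the author's thesis (\cite[4.4.3]{These}) --- so there is no in-text argument to compare yours against. Judged on its own terms, your plan has the right skeleton (transport everything through $η$, read $φ$ and multiplication by $p$ as shift-plus-$p$-power on the entries, parametrize by the $d$ seed coordinates in $\go m_R$, and use prorigidity plus global inversion to match the two analytic structures), and parts (ii) and (iii) are essentially fine modulo the usual care about uniformity when infinitely many coordinates are involved. But the two places you yourself flag as delicate are exactly where the proof lives, and in one of them your proposed resolution does not work.

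The serious gap is the inclusion $\ker(φ^h-p^d:B^+\cris→B^+\cris)⊆η(BW(R))$ in (i). You propose to deduce it from the fact that ``an eigenvector for $φ^h$ with eigenvalue $p^d$ has bivector coordinates forced to satisfy the relation'' --- but an element of $B^+\cris$ has no bivector coordinates until you have already placed it in the image of $η$, so this is circular. The non-circular route has to exploit the eigenvector equation itself, e.g.\ iterating $x=p^{-nd}φ^{nh}(x)$ to place $x$ in $\bigcap_n φ^n(A\cris)[\frac1p]$ and then controlling that intersection inside $W(R)[\frac1p]$ (or running a successive approximation modulo $p$ using the divided-power structure of $A\cris$); none of this machinery appears in your sketch, and it is the genuinely hard content of (i). Secondarily, in (iv) you correctly reduce ``multiplication by $c∈\Qph$'' to the case $d'=0$ via $E_{0,h}=\Qph$, but the analyticity of the product map itself --- rewriting $η(x)η(x')=∑_{m,n}p^{-(m+n)}[x_m^{p^m}x_n'^{p^n}]$ as a single bivector via the universal Witt polynomials and verifying convergence and the $\liminf$ condition in the coordinates $\go m_R^d×\go m_R^{d'}→\go m_R^{d+d'}$ --- is deferred entirely; since the bilinear map is not a group morphism, global inversion cannot be invoked to bypass this computation. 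Two smaller points: the homeomorphism with $\go m_R^d$ in (ii) only makes sense for $d≥1$ (for $d=0$ one gets $\Qph$, which is étale, not connected), and in (i) the periodicity relation alone does not force $\liminf_{-∞}v_R(x_n)>0$ --- that requires the seeds to lie in $\go m_R$, which for $d\geq 1$ is exactly the condition of membership in $BW(R)$ rather than a consequence of the relation.
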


%
%
%

\section{Effective Banach-Colmez spaces}
\setcounter{thm}{0}

\begin{df}
The category of~\emph{effective Banach-Colmez spaces} is the full
subcategory of all effective spectral Banach spaces~$E$ that insert in an
analytic short exact sequence
\begin{equation} \label{eq:pres}
0 → V → E → L → 0,
\end{equation}
with $V$~being a finite-dimensional $ℚ_p$-vector space and $L$~being a
finite-dimensional $\Cp$-vector space. Such a short exact sequence is
a~\emph{presentation} of~$E$.

The integers~$h = \dim_{ℚ_p} V$ and~$d =
\dim_{ℚ_p} L$ are called the \emph{height} and \emph{dimension} of this
presentation.
\end{df}

Although effective Banach-Colmez spaces seem to be
a very specific case of spectral Banach spaces,
the logarithmic exponential sequence proves that the group of
$p$-division points of any rigid group has a natural structure as
a Banach-Colmez space~\cite[7.32]{FF2011Courbes}.

The integers~$d$ and~$h$ do \emph{a priori} depend on the choice of the
presentation.
The strong version of the fundamental lemma is equivalent to the fact,
which we prove in this section,
that these integers actually depend only on the effective Banach-Colmez
space~$E$ itself.

We start this section by precising the structure of
some Banach-Colmez spaces via $p$-divisible groups;
we then compute the extension group~$\Ext^1(L, V)$,
which in turns allow to state a structure theorem for
one-dimensional Banach-Colmez spaces.
This allows us to prove the “fundamental lemma” by reducing
to the case where~$E = E_{1/h}$.
\subsection{$p$-divisible groups and effective Banach-Colmez spaces}

Let~$k$ be a perfect subfield of~$\Fpb$
and~$Γ$ be a $p$-divisible group~\cite[II.11]{Demazure1972} over~$k$.
There exists an anti-equivalence of categories between
$p$-divisible groups over~$k$
and \emph{Dieudonné modules}~\cite[III.8]{Demazure1972}
over the ring of~Witt vectors~$W(k)$.
A Dieudonné module is a free module of finite type~$M$ over~$W(k)$, with a
semi-linear map~$φ: M → M$ such that~$φ(M) ⊃ p M$.
We write~$M(Γ)$ for the Dieudonné module associated with
a $p$-divisible group~$Γ$.

\begin{prop}\label{prop:dieud-BW}
Let~$M$ be a Dieudonné module.
\begin{enumerate}
\item For any continuous morphism~$f: M → B^+\cris$ commuting with~$φ$,
the image of~$f$ is contained in~$BW(R)$.
\item The group
\[ E(M) = \Hom_{W(k), φ} (M, B^+\cris) \]
has a canonical structure as a spectral Banach space.
\end{enumerate}
\end{prop}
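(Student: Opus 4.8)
The plan is to reduce both statements to the concrete description of $E_{d,h}$ and of $BW(R)$ already established in Proposition~\ref{prop:Edh} and Examples~\ref{ex:an}, using the classification of Dieudonné modules over the algebraically closed (or perfect) field $k$. Since a Dieudonné module $M$ is a free $W(k)$-module of finite type with a $\varphi$-semilinear endomorphism satisfying $\varphi(M) \supset pM$, the Dieudonné--Manin classification lets me decompose $M$ (after extending scalars if necessary) into isotypic pieces indexed by slopes $\tfrac{d}{h}$ with $0 \le d \le h$ and $\gcd(d,h)=1$. Each simple summand of slope $\tfrac{d}{h}$ is, up to isogeny, a module on which $\varphi^h$ acts as $p^d$ times a unit; a $\varphi$-equivariant continuous map from such a summand into $B^+\cris$ therefore lands in the kernel of $\varphi^h - p^d$, which is precisely $E_{d,h}$.

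For part~(i), I would first treat a single cyclic generator. If $m \in M$ satisfies a relation of the form $\varphi^h(m) = p^d m$ (which holds on each simple isotypic component after the classification), then for any $\varphi$-commuting continuous $f$ the image $f(m) \in B^+\cris$ satisfies $\varphi^h(f(m)) = p^d f(m)$, so $f(m) \in E_{d,h}$. By Proposition~\ref{prop:Edh}(i), $E_{d,h}$ consists exactly of bivectors in $BW(R)$ satisfying the stated periodicity, hence $f(m) \in BW(R)$. The general element of $M$ is a $W(k)$-linear combination of such generators; since $BW(R)$ is a $W(R) \supset W(k)$-module and the injection $\eta\colon BW(R) \hookrightarrow B^+\cris$ is $W(R)$-linear, the image of all of $M$ lies in $BW(R)$. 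The condition $\varphi(M)\supset pM$ forces $0\le d\le h$ on each slope, so the relevant $E_{d,h}$ are exactly the effective ones covered by Proposition~\ref{prop:Edh}.

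For part~(ii), the identification $E(M) = \Hom_{W(k),\varphi}(M, B^+\cris) = \Hom_{W(k),\varphi}(M, BW(R))$ from part~(i) is the key, because $BW(R)$ carries an analytic structure by Examples~\ref{ex:an}(4), whereas $B^+\cris$ does not. Choosing a $W(k)$-basis of $M$ realizes $E(M)$ as the closed subspace of a finite product of copies of $BW(R)$ cut out by the finitely many $\varphi$-semilinearity relations coming from the action of $\varphi$ on the basis; each such relation is an analytic condition because both $\varphi\colon BW(R)\to BW(R)$ and the $\Qph$-scalar multiplications are analytic (Examples~\ref{ex:an}(4) and Proposition~\ref{prop:Edh}(iv)). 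A finite intersection of kernels of analytic group morphisms is again a spectral group, using that the category of spectral varieties has finite fibre products (and hence kernels); and multiplication by $\tfrac1p$ is analytic on each $E_{d,h}$-component, so the whole space is an effective spectral Banach space. The main obstacle I anticipate is the descent step when $k$ is merely perfect rather than algebraically closed: the Dieudonné--Manin decomposition into isotypic slope pieces is only available after base change to $W(\Fpb)$, so I must check that the $\varphi$-equivariance relations descend to $W(k)$-structure and that the resulting $E(M)$ retains its analytic structure. This is where I would spend the most care, exploiting the $W(k)$-linearity of $\eta$ and the functoriality of the constructions in Examples~\ref{ex:an} to glue the slope-by-slope analytic structures into one on $E(M)$.
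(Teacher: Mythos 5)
Your proposal is correct and follows essentially the same route as the paper: reduce part~(i) via the Dieudonné--Manin classification (after a base extension, which changes neither $E(M)$ nor the image of $f$) to the cyclic modules $M_{d,h}$, whose $\varphi$-equivariant images land in $E_{d,h}\subset BW(R)$ by Proposition~\ref{prop:Edh}, and then for part~(ii) realize $E(M)$ inside $BW(R)^h$ as the kernel of the analytic semilinearity constraint $x\mapsto x\circ\varphi-\varphi\circ x$. The descent issue you flag is handled in the paper simply by the identity $E(M\otimes_k k')=E(M)$, and is in any case irrelevant for part~(ii), where the kernel construction is carried out directly over $W(k)$.
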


\begin{proof}
(i) Up to a finite extension~$k'$ of the field~$k$,
$M$~is isomorphic(~\cite[IV.4]{Demazure1972}) to
a direct sum of modules of the form~$M_{d,h} = W(k)[φ]/(φ^h-p^d)$.
Since $E(M ⊗_k k') = E(M)$, its is enough to prove this when~$M = M_{d,h}$.
The condition~$φ(M) ⊃ p M$ then means that~$d ≤ h$,
and thus by~\ref{prop:Edh}(ii), the image of~$f$ lies in~$E_{d,h} ⊂ BW(R)$.

(ii) 
Let~$h$ be the rank of the free module~$M$ over~$W(k)$.
Then $E(M)$~is the kernel of the map~$F: x ↦ x ∘ φ - φ ∘
x$ from~$\Hom_{W(k)} (M, BW(R)) = BW(R)^h$ to itself. The map~$F$ is
analytic by~\ref{prop:Edh}(iii) and therefore its kernel is spectral.
\end{proof}

\begin{prop}\label{prop:EM-GammaR}
Let $Γ$~be a $p$-divisible group over~$k$, of dimension~$d$ and
height~$h$, and let~$M$ be the Dieudonné module of~$Γ$.
Let~$R$ be the complete, perfect ring of characteristic~$p$ defined
in~\ref{ex:an}(iii).
Then the group~$Γ(R)$ has a canonical structure as a
spectral Banach space, and it is canonically isomorphic to
the space~$E(M)$ defined in Prop.~\ref{prop:dieud-BW}.
\end{prop}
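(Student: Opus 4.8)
The plan is to produce a natural isomorphism of abstract groups $Γ(R) \xrightarrow{\sim} \Hom_{W(k),φ}(M, BW(R))$ and then to identify this last group with $E(M) = \Hom_{W(k),φ}(M, B^+\cris)$, thereby transporting to $Γ(R)$ the spectral Banach structure that Prop.~\ref{prop:dieud-BW} attaches to $E(M)$. The second identification is essentially free from the material already in hand. The canonical injection $η: BW(R) ↪ B^+\cris$ is $W(R)$-linear and continuous, and one checks directly on bivectors that it intertwines the Frobenius of $BW(R)$ with that of $B^+\cris$; hence post-composition with $η$ carries $\Hom_{W(k),φ}(M, BW(R))$ injectively into $E(M)$. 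Conversely, Prop.~\ref{prop:dieud-BW}(i) asserts that the image of every $φ$-equivariant $f: M → B^+\cris$ lies in $η(BW(R))$, so $f$ factors uniquely as $η ∘ g$ with $g$ continuous and again $φ$-equivariant; thus the two $\Hom$-groups coincide. Since $BW(R)$ is a spectral group with analytic $φ$ (Ex.~\ref{ex:an}(iv)) and $η$ is analytic, this is compatible with the structure put on $E(M)$ in Prop.~\ref{prop:dieud-BW}(ii).

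The core of the argument is therefore the first isomorphism. This is classical Dieudonné theory over a perfect base, in its Witt-bivector formulation: the bivectors $BW(R)$ are the object through which the theory expresses $R$-points of $p$-divisible groups over $k$, compatibly with the anti-equivalence between $p$-divisible groups and Dieudonné modules~\cite{Demazure1972}, so that $Γ(R) \cong \Hom_{W(k),φ}(M, BW(R))$ naturally in $Γ$ and $R$. To verify it I would reduce to the isoclinic case exactly as in the proof of Prop.~\ref{prop:dieud-BW}(i): after extending scalars to a finite extension $k'/k$ over which $M$ decomposes as a sum of standard modules $M_{d,h} = W(k)[φ]/(φ^h - p^d)$, and using that $R ⊃ \Fpb ⊃ k'$ leaves both $Γ(R)$ and $E(M ⊗_k k')$ unchanged, additivity of both functors reduces the claim to $M = M_{d,h}$. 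For such $M$ a $φ$-equivariant map is determined by the image $b$ of a generator, subject to $φ^h(b) = p^d b$; by Prop.~\ref{prop:Edh}(i) this is precisely the periodic-bivector description of $E_{d,h}$, which one then matches against the explicit $R$-points of the standard $p$-divisible group of dimension $d$ and height $h$.

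It remains to upgrade this group isomorphism to one of spectral Banach spaces. On the target side, $\Hom_{W(k),φ}(M, BW(R))$ is the kernel of the analytic endomorphism $F: x ↦ x ∘ φ - φ ∘ x$ of $BW(R)^h$ used in Prop.~\ref{prop:dieud-BW}(ii), hence spectral; transporting along the Dieudonné bijection defines the asserted canonical spectral Banach structure on $Γ(R)$, with canonicity following from naturality. The hard part is exactly this transport: one must check that the classical, \emph{a priori} only group-theoretic, identification $Γ(R) \cong \Hom_{W(k),φ}(M, BW(R))$ is an isomorphism in the category of spectral varieties --- that in the bivector coordinates of Prop.~\ref{prop:Edh} the maps realizing the $R$-points of $Γ$ are given by everywhere-convergent power series, and so are analytic in the sense of the category. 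The reduction to $M_{d,h}$ makes this a finite, explicit verification against Prop.~\ref{prop:Edh}(ii)--(iii), but it is where the genuine work lies; the $φ$-equivariance of $η$ and the matching of the dimension $d$ and height $h$ of $Γ$ with those of the presentation of $E_{d,h}$ are then routine consistency checks.
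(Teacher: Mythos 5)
The overall skeleton of your argument --- identify $Γ(R)$ with $\Hom_{W(k),φ}(M,BW(R))$ as an abstract group via Dieudonné theory, identify the latter with $E(M)$ through $η: BW(R) ↪ B^+\cris$ using Prop.~\ref{prop:dieud-BW}(i), then upgrade to an analytic isomorphism --- matches the paper. But there are two genuine gaps. First, you propose to \emph{define} the spectral structure on $Γ(R)$ by transport from $E(M)$. That makes the isomorphism with $E(M)$ tautological and empties the proposition of content; what is actually needed (and used later, e.g.\ in Prop.~\ref{prop:EM-BCE}, where the logarithm sequence must be analytic) is the \emph{intrinsic} structure: the paper defines it by choosing formal coordinates on the smooth group $Γ$, which identify $Γ(R)$ with $\go m_R^d$, and noting that any change of coordinates is a $k$-linear, hence analytic, automorphism of $\go m_R^d$. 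The real content of the statement is that the Dieudonné-theoretic bijection is analytic for \emph{that} structure.

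Second, and more seriously, you correctly flag that checking the bijection is an isomorphism of spectral varieties is ``where the genuine work lies'', but you leave it as a ``finite, explicit verification'' without indicating how to carry it out --- and done naively it is not finite, since one direction requires reconstructing an entire Witt bivector analytically from a point of $\go m_R^d$. The key idea you are missing is the global inversion theorem for prorigid spectral groups: a bijective analytic morphism of prorigid groups is automatically an analytic isomorphism. This reduces the problem to checking analyticity in the easy direction only: choosing a $W(k)$-basis $(e_1,…,e_h)$ of $M$ with $(e_1,…,e_d)$ lifting a basis of $M/φ(M)$, the map $E(M) → Γ(R)$ becomes $(x_1,…,x_h) ↦ (x_{1,0},…,x_{h,0})$, extraction of the zero components of the bivectors, which is visibly analytic. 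Without global inversion or a substitute for it, your argument does not close. A smaller point: your reduction of the group isomorphism to $M = M_{d,h}$ over a finite extension is unnecessary here (the paper simply invokes the definition of the Dieudonné functor, $Γ(R) = \Hom_{W(k),φ}(M, CW(R))$, and $p$-divisibility to pass from covectors to bivectors), and the Dieudonné--Manin decomposition you would invoke holds only up to isogeny, which would require an extra argument for the integral statement about $Γ(R)$.
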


\begin{proof}
Let~$d$ be the dimension of~$Γ$.
Since $Γ$~is smooth, by~\cite[II.10,II.11]{Demazure1972},
the choice of a basis of the tangent space to~$Γ$ defines an isomorphism
between the affine algebra of~$Γ$ and
the formal series algebra~$k\bcro{x_1,…,x_d}$.
Such an isomorphism also identifies the set~$Γ(R)$ with~$\go m_R^d$.
Moreover, any other choice of coordinates amounts to a $k$-linear
automorphism of~$\go m_R^d$, which is always analytic.
We thus define the analytic structure on~$Γ(R)$ by transport from~$\go m_R^d$.

By definition of the Dieudonné functor~\cite[III.1.2]{Fontaine1977},
we know that~$Γ(R) = \Hom_{W(k), φ} (M, CW(R))$.
Since~$Γ(R)$~is $p$-divisible, there exists a group isomorphism
\begin{equation}
Γ(R) = \Hom_{W(k), φ} (M, CW(R)) = \Hom_{W(k), φ} (M, BW(R)).
\end{equation}
By~\ref{prop:Edh}(i),
there exists a natural group isomorphism~$η: E(M) → Γ(R)$.

Let~$(e_1,…,e_h)$ be a $W(k)$-basis of~$M$ such that~$(e_1,…,e_d)$ is a
basis of~$M/φ(M)$. Identifying~$E(M)$ with a subspace of~$BW(R)^h$ through
this choice of coordinates, the map~$η$ is then~$(x_1,…,x_h) ↦
(x_{1,0},…,x_{h,0})$, where~$(x_{i,0})$~is the zero component of the Witt
bivector~$x_i$. This map is therefore analytic. Since both spaces are
prorigid, by the global inversion theorem, $η$~is an analytic
isomorphism.
\end{proof}

\begin{prop}\label{prop:Hom-WB}
Let~$B$ be a complete, perfect $k$-algebra.
There exists a natural bijection between the set of continuous algebra
morphisms
\[ \Hom_{W(k),\mathrm{cont}} (W(B), \O_\Cp) \quad ≃ \quad
  \Hom_{k,\mathrm{cont}} (B, R).\]
\end{prop}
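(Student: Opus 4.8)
The plan is to write the bijection down explicitly as a functorial map and then to prove it is one by reducing modulo~$p$. Define $\Psi : \Hom_{k,\mathrm{cont}}(B,R) → \Hom_{W(k),\mathrm{cont}}(W(B),\O_\Cp)$ by $\Psi(f)=θ∘W(f)$, where $W(f): W(B)→W(R)$ is the image of~$f$ under the Witt functor and $θ$ is the map of the Notations. This map is continuous and natural in~$B$, and it is $W(k)$-linear (on $W(k)⊂W(B)$ it is the restriction of~$θ$ to $W(k)⊂W(R)$). The whole point is to show that $\Psi$ is bijective; naturality of the resulting bijection is then automatic from the functorial formula.

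First I would reduce everything modulo~$p$. Since $B$ and $R$ are perfect we have $W(B)/p=B$ and $W(R)/p=R$; the map~$θ$ reduces to the canonical projection $θ_0: R→\O_\Cp/p$ of the Notations, and $W(f)\bmod p=f$. Hence $\Psi(f)\bmod p=θ_0∘f$. Writing~$\rho$ for reduction modulo~$p$ on $\Hom_{W(k),\mathrm{cont}}(W(B),\O_\Cp)$ and $\sigma$ for the map $f↦θ_0∘f$ on $\Hom_{k,\mathrm{cont}}(B,R)$---both taking values in $\Hom_{k,\mathrm{cont}}(B,\O_\Cp/p)$---this identity reads $\rho∘\Psi=\sigma$. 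It therefore suffices to prove that $\rho$ and $\sigma$ are both bijective, for then $\Psi=\rho^{-1}∘\sigma$.

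The bijectivity of~$\sigma$ expresses that $θ_0$ realises~$R$ as the inverse perfection $\varprojlim_φ(\O_\Cp/p)$ of $\O_\Cp/p$. Concretely, because $B$ is perfect its Frobenius~$φ$ is invertible, so a continuous $k$-algebra morphism $g: B→\O_\Cp/p$ admits exactly one compatible refinement, namely the system $(g∘φ^{-n})_n$, which is a continuous map $B→R$ lifting~$g$; the successive $φ$-twists of the structure morphism $k→\O_\Cp/p$ are absorbed by the perfect structure of~$R$, so $k$-linearity is preserved. This construction is inverse to~$\sigma$, which is therefore bijective.

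The bijectivity of~$\rho$ is the universal property of the Witt vectors of the perfect ring~$B$ with values in the $p$-adically complete and separated ring~$\O_\Cp$: reduction modulo~$p$ identifies continuous ring morphisms $W(B)→\O_\Cp$ with ring morphisms $B→\O_\Cp/p$. Injectivity is easy: such a morphism is determined by its values on the Teichmüller representatives $[c]=[c^{1/p^n}]^{p^n}$ (the roots $c^{1/p^n}$ exist because $B$ is perfect), and since $a\equiv b\pmod p$ forces $a^{p^n}\equiv b^{p^n}\pmod{p^{n+1}}$, these values are pinned down modulo~$p$, hence determined since $\O_\Cp$ is separated. Surjectivity is the real content: given $g: B→\O_\Cp/p$ one forms the multiplicative lift $\tau(c)=\lim_n\widehat{g(c^{1/p^n})}^{\,p^n}$, which converges in~$\O_\Cp$ independently of the chosen set-theoretic lifts, and one must check that $\sum_i[c_i]p^i↦\sum_i\tau(c_i)p^i$ is a continuous ring homomorphism. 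This verification---that $\tau$ is multiplicative and that the map respects Witt addition---is the main obstacle; I would dispose of it by invoking the standard universal property of the Witt functor on perfect rings (automatically compatible with $W(k)$- and $k$-linearity, since $W(k)/p=k$), reproducing the explicit Teichmüller computation only if a self-contained argument is wanted. Granting that $\rho$ is bijective, together with the bijectivity of~$\sigma$ this shows $\Psi=\rho^{-1}∘\sigma$ is the desired natural bijection.
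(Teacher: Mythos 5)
Your proposal is correct, and your bijection is literally the paper's: the map $\Psi(f)=θ∘W(f)$ is what the paper writes in Witt coordinates as $g(x)=∑ p^{n}θ(f_n(x_n))$ with $f_n=f∘φ^{-n}$, and your composite $σ^{-1}∘ρ$, unwound, recovers the paper's explicit inverse $g↦\bigl(g([x^{p^{-n}}])\bmod p\bigr)_{n≥0}$. The difference is architectural. The paper simply exhibits the two formulas and asserts (rather tersely) that they are mutually inverse continuous ring morphisms; you instead factor the correspondence through $\Hom_{k}(B,\O_\Cp/p)$ and reduce to two separate universal properties: that of the inverse perfection $R=\varprojlim_{φ}\O_\Cp/p$ (your $σ$, for which your argument is complete --- the uniqueness of the compatible refinement $g_n=g_0∘φ^{-n}$ follows from $g_n(b)=g_n(φ^{-n}(b))^{p^n}=g_0(φ^{-n}(b))$), and that of Witt vectors of a perfect ring with values in the $p$-adically complete and separated ring $\O_\Cp$ (your $ρ$). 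This decomposition is cleaner and makes visible where the real content sits: the surjectivity of $ρ$, i.e.\ that the multiplicative Teichmüller lift $τ(c)=\lim\widehat{g(c^{1/p^n})}^{p^n}$ extends to a ring homomorphism $W(B)→\O_\Cp$. You defer that verification to the standard theory of strict $p$-rings, which is legitimate; note that the paper does not escape the same difficulty, it merely relocates it (the forward map $θ∘W(f)$ is a homomorphism for free, but the additivity of the paper's explicit inverse is exactly the same classical fact, left unverified there). The only point worth flagging is continuity: your reduction implicitly uses that $φ^{-1}$ is continuous on $B$ and that continuity of $W(B)→\O_\Cp$ can be read off modulo $p$; the paper is equally silent on this, so it is not a gap relative to the intended level of rigour.
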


\begin{proof}
Let~$f: B → R$ be a continuous $k$-algebra morphism. Since~$B$~is
perfect, there exists a unique sequence of maps~$f_n: B → R$ such
that~$f_0 = f$ and~$f_{n+1}^{p} = f_n$.

Let~$x = (x_n)_{n ≥ 0} ∈ W(B)$. Then the series~$∑ p^{n} θ(f_n(x_n))$
converges to some value~$g(x) ∈ \O_\Cp$, and the map~$g: W(B) → \O_\Cp$
is a continous $W(k)$-algebra morphism. We define a map~$Θ: \Hom (W(B),
\O_\Cp) → \Hom (B, R)$ by defining~$Θ(f) = g$.

Conversely, given a continous morphism~$g:  W(R) → \O_\Cp$ and~$x ∈ B$,
let~$x_n = p^{n} [x]^{p^{-n}}$ be the Witt vector with $n$-th coordinate
equal to~$x$, and define~$f(x) = (g(x_n))_{n ≥ 0}$. We see that the
map~$g ↦ f$ is the inverse of~$Θ$.
\end{proof}

\begin{prop}\label{prop:EM-BCE}
Let~$Γ$ be a connected $p$-divisible group over~$k$ and~$G$ be a smooth
formal group over~$W(k)$ such that~$G ⊗_{W(k)} k = Γ$. Then the following
hold:
\begin{enumerate}
\item The group~$E(M)$~is canonically isomorphic to~$\Hom (ℚ_p, G(\O_\Cp))$.
\item Let~$t_G$ be the tangent space to~$G$ at zero and $T(Γ)$~be the
Tate module~$\Hom (ℚ_p/ℤ_p, Γ)$. Then there exists an
analytic exact sequence
\[ 0 → T(Γ)(\overline k) ⊗_{ℤ_p} ℚ_p → E(M) → t_G (\O_\Cp) ⊗_{ℤ_p} ℚ_p → 0. \]
\end{enumerate}
\end{prop}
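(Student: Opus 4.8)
The plan is to obtain (i) from Proposition~\ref{prop:EM-GammaR}, which already furnishes a canonical analytic isomorphism $E(M)\cong\Gamma(R)$; it then suffices to construct a natural isomorphism $\Gamma(R)\cong\Hom(\Qp,G(\O_\Cp))$. First I would rewrite the right-hand side. Writing $\Qp=\varinjlim_n p^{-n}\mathbb{Z}_p$ and using that $G(\O_\Cp)=\go m_\Cp^d$ is $p$-adically complete, so that $\Hom(\mathbb{Z}_p,G(\O_\Cp))=G(\O_\Cp)$ by evaluation at $1$, one identifies $\Hom(\Qp,G(\O_\Cp))$ with the inverse limit $\varprojlim_{\times p}G(\O_\Cp)$, that is, with the $\O_\Cp$-points of the universal cover $\widetilde G=\varprojlim_{[p]}G$.

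The heart of (i) is then the comparison of these characteristic-zero points with the characteristic-$p$ points $\Gamma(R)$, and this is where Proposition~\ref{prop:Hom-WB} enters. Because $\Gamma$ is connected, the kernel $\Gamma[p]$ is infinitesimal, so $[p]$ is a purely inseparable isogeny and the coordinate ring $\O_{\widetilde\Gamma}=\varinjlim_{[p]^\star}\O_\Gamma$ of the special-fibre universal cover $\widetilde\Gamma$ is a complete perfect $k$-algebra. For the same reason $[p]$ is bijective on $\Gamma(R)$ — injective since the infinitesimal group scheme $\Gamma[p]$ has no points in the reduced ring $R$, surjective since $R$ is perfect — whence $\Gamma(R)=\widetilde\Gamma(R)=\Hom_k(\O_{\widetilde\Gamma},R)$. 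Applying Proposition~\ref{prop:Hom-WB} with $B=\O_{\widetilde\Gamma}$ turns this into $\Hom_{W(k)}(W(\O_{\widetilde\Gamma}),\O_\Cp)$, which is $\widetilde G(\O_\Cp)$ once one knows that the universal cover is the canonical Witt lift of its perfect special fibre, i.e.\ $\O_{\widetilde G}=W(\O_{\widetilde\Gamma})$. The resulting bijection $\Gamma(R)\cong\Hom(\Qp,G(\O_\Cp))$ is analytic between prorigid Banach spaces, so by the global inversion theorem it is an analytic isomorphism. I expect this last identification $\O_{\widetilde G}=W(\O_{\widetilde\Gamma})$ — the statement that the crystalline universal cover has no nontrivial deformations over $W(k)$ — to be the main obstacle, being exactly the bridge between characteristic $p$ and characteristic $0$ that Proposition~\ref{prop:Hom-WB} is designed to cross.

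For (ii) I would feed the description $E(M)\cong\varprojlim_{\times p}G(\O_\Cp)$ of (i) into the logarithmic exponential sequence of the formal group $G$~\cite[7.32]{FF2011Courbes}. The formal logarithm $\log_G\colon G(\O_\Cp)\to t_G(\Cp)=t_G(\O_\Cp)\otimes_{\mathbb{Z}_p}\Qp$ is given by everywhere-convergent power series on $\go m_\Cp^d$, hence analytic by Examples~\ref{ex:an}, and has kernel the torsion subgroup $G[p^\infty](\O_\Cp)$. Composing with evaluation of a compatible system at its zeroth term yields an analytic map
\[
E(M)=\varprojlim_{\times p}G(\O_\Cp)\longrightarrow t_G(\O_\Cp)\otimes_{\mathbb{Z}_p}\Qp,
\qquad (x_n)_n\longmapsto\log_G(x_0),
\]
whose kernel is $\varprojlim_{\times p}G[p^\infty](\O_\Cp)=\Hom(\Qp,G[p^\infty](\O_\Cp))$, the rational Tate module, i.e.\ the finite-dimensional $\Qp$-space denoted $T(\Gamma)(\overline k)\otimes_{\mathbb{Z}_p}\Qp$ in the statement. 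Since that space is étale, its inclusion into $E(M)$ is automatically analytic.

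It remains to see that this is an analytic short exact sequence and not merely an exact sequence of abstract groups. Surjectivity of $(x_n)_n\mapsto\log_G(x_0)$ follows from the surjectivity of $\log_G$ onto $t_G(\Cp)$ together with the $p$-divisibility of $G(\O_\Cp)$, which allows one to build a compatible system realising any prescribed value; exactness in the middle is the kernel computation above. To promote this to the spectral category I would once more invoke the global inversion theorem: the induced continuous bijection from the quotient $E(M)\big/\bigl(T(\Gamma)(\overline k)\otimes\Qp\bigr)$ onto $t_G(\O_\Cp)\otimes\Qp$ is an analytic morphism of prorigid Banach spaces, hence an isomorphism. Here the main difficulty is controlling the convergence and the image of $\log_G$ on the whole open unit ball $\go m_\Cp^d$, which is precisely the content of the logarithmic exponential sequence.
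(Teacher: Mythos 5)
Your proposal is correct and takes essentially the same route as the paper: both identify $E(M)$ with $\Gamma(R)$ via Proposition~\ref{prop:EM-GammaR}, realize the coordinate ring of the universal cover of~$G$ as the Witt vectors of the perfection of that of~$\Gamma$ (using connectedness so that $[p]$ is cofinal with Frobenius, and that this ring is $p$-adically separated and complete with perfect reduction), apply Proposition~\ref{prop:Hom-WB} to compare $\O_\Cp$-points with $R$-points, and deduce~(ii) from the formal-group logarithm sequence tensored with~$\Qp$, with analyticity of the two outer maps argued exactly as in the paper. Your explicit check that $[p]$ is bijective on $\Gamma(R)$ and your appeal to global inversion to upgrade~(ii) to an analytic exact sequence merely spell out steps the paper leaves implicit.
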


In particular, $E(M)$~is an effective Banach-Colmez space.

\begin{proof}
(i) Let~$A$ and~$B$ be the affine algebras of~$G$ and~$Γ$, and~$[p]_A: A
→ A$, $[p]_B: B → B$ be the multiplication by~$p$ in the corresponding
groups. Let~$A'$ and~$B'$ be the projective limits of~$A$ and~$B$ for the
maps~$[p]_A$ and~$[p]_B$.
Since $Γ$~is connected, $[p]_B$ is cofinal to the Frobenius map of~$B$,
and $B'$~is therefore canonically isomorphic to the completion of
the radical closure of~$B$.
The reduction map~$A → B$ then extends to a
map~$A' → B'$, whose reduction modulo~$p$ is an isomorphism~$A' / p A' ≃
B'$. Since~$A'$ is $p$-adically separated and complete and~$B'$ is
perfect, $A'$~is isomorphic to to the ring of Witt vectors~$W(B')$.
Moreover, $A'$ represents, by construction, the group
functor~$U(G) = \Hom(ℚ_p, G)$.

By Proposition~\ref{prop:Hom-WB}, there exists a bijection between
the sets~$\Hom(A',\O_\Cp) = U(G)(\O_\Cp)$ and~$\Hom(B',R) = Γ(R)$.
Moreover, this bijection being functorial in~$Γ$, it is actually a group
isomorphism. Therefore, the three group functors~$U(G)(\O_\Cp)$, $Γ(R)$
and (by~\ref{prop:EM-GammaR})~$E(M)$ are functorially isomorphic. In
particular, the first of those three groups is independent of the
choice of the lift~$G$.

The exact sequence in~\ref{prop:EM-BCE}(ii) is the logarithmic exact
sequence from~\cite[§4]{Tate1967}, tensored with~$\Qp$. Since
$T(Γ)(\overline{k}) ⊗_{ℤ_p} \Qp$~is étale, the left morphism is analytic;
the right morphism is the formal group logarithm of~$G$ and, therefore,
is analytic.
\end{proof}

Let~$M_{d,h} = W(k)[φ]/(φ^h-p^d)$.
Then~$M_{d,h}$~is a Dieudonné module and~$E(M_{d,h}) = E_{d,h}$.
By~\ref{prop:EM-GammaR},
there exists a canonical analytic structure on~$E_{d,h} = E(M_{d,h})$.
Moreover, the choice of a lift of
the $p$-divisible group associated to~$M_{d,h}$
gives a presentation of~$E_{d,h}$ with height~$h$ and dimension~$d$.

\begin{prop}\label{prop:Hom-EM-C}
Let~$M$ be a Dieudonné module.
Then the group of analytical linear applications from~$E(M)$ to~$\Cp$
is canonically isomorphic to~$M ⊗_{W(k)} \Cp$.
\end{prop}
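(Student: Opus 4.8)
The plan is to exhibit an explicit $\Cp$-linear map from $M\otimes_{W(k)}\Cp$ to the space of analytic linear functionals $E(M)\to\Cp$, to observe that it is natural (hence canonical), and then to prove it is bijective after reducing to the standard modules $M_{d,h}$.

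First I would construct the map. Recall $E(M)=\Hom_{W(k),\varphi}(M,BW(R))$ by Proposition~\ref{prop:dieud-BW}. For $m\in M$ and $c\in\Cp$, set
\[ \Phi(m\otimes c)\colon E(M)\to\Cp,\qquad f\mapsto c\,\theta(f(m)). \]
Since $\theta\colon BW(R)\to\Cp$ is a ring morphism (Examples~\ref{ex:an}(iv)) restricting on $W(k)\subset W(R)$ to the canonical embedding $W(k)\hookrightarrow\O_\Cp$, one checks that $\theta(f(wm))=\theta(w)\,\theta(f(m))$ for $w\in W(k)$, so the assignment $(m,c)\mapsto\Phi(m\otimes c)$ is $W(k)$-balanced and factors through the tensor product; it is visibly $\Cp$-linear. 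Each $\Phi(m\otimes c)$ is analytic: writing $m$ in a $W(k)$-basis of $M$, the evaluation $f\mapsto f(m)$ is an $\O_\Cp$-linear combination of the coordinate maps of the closed embedding $E(M)\hookrightarrow BW(R)^h$, hence analytic, and it is followed by the analytic maps $\theta$ and multiplication by $c$. Naturality of $\Phi$ in $M$ is immediate from the formula, so it suffices to prove that $\Phi$ is an isomorphism.

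Both sides behave well under the standard reductions. They are additive in $M$ (using $E(M_1\oplus M_2)=E(M_1)\times E(M_2)$ and the fact that an additive analytic functional on a product is the sum of its restrictions), and they are unchanged under a finite extension $k\to k'$, since $E(M\otimes_{W(k)}W(k'))=E(M)$ and $M\otimes_{W(k)}\Cp=(M\otimes_{W(k)}W(k'))\otimes_{W(k')}\Cp$, all compatibly with $\Phi$. Moreover an isogeny $u\colon M\to M'$ induces $u^{*}\colon E(M')\to E(M)$ which is bijective: a dual isogeny $v$ with $vu=p^{N}$ gives $u^{*}v^{*}=p^{N}\cdot\mathrm{id}$, invertible on the $\Qp$-vector space $E(M)$, so $u^{*}$ is an analytic isomorphism by the global inversion theorem, while $M\otimes\Cp\cong M'\otimes\Cp$. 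Extending $k$ to $\Fpb$ and invoking the Dieudonné–Manin classification, I may therefore assume $M=M_{d,h}$, so that $E(M)=E_{d,h}$. In the basis $1,\varphi,\dots,\varphi^{h-1}$ of $M_{d,h}$ the element $f\in E(M_{d,h})$ corresponds to $x=f(1)\in E_{d,h}$ and $\Phi(\varphi^{i}\otimes c)(x)=c\,\theta(\varphi^{i}x)$; thus the image of $\Phi$ is the $\Cp$-span of the $h$ functionals $\theta\circ\varphi^{i}$, $0\le i\le h-1$, and the claim becomes that these form a basis of $\Hom_{\mathrm{an}}(E_{d,h},\Cp)$.

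Injectivity of $\Phi$ amounts to the $\Cp$-linear independence of the $\theta\circ\varphi^{i}$, i.e. to the image of $\bar\theta\colon E_{d,h}\to\Cp^{h}$, $x\mapsto(\theta(\varphi^{i}x))_{i}$, spanning $\Cp^{h}$. Here I would use that any relation $\sum_{i}c_{i}\,\theta(\varphi^{i}x)=0$ on $E_{d,h}$ is preserved by $x\mapsto\varphi x$ (since $\varphi^{h}=p^{d}$ on $E_{d,h}$), so the relations form a module over $\Cp[T]/(T^{h}-p^{d})$; as this algebra is split étale over $\Cp$, the relations split into eigen-relations, each forcing a twisted functional $x\mapsto\theta(\sum_{i}\mu^{-i}\varphi^{i}x)$ to vanish, which contradicts the surjectivity of $\theta$ on $E_{d,h}$ (for $d\ge1$) or the independence of the conjugate embeddings of $\Qph$ (for $d=0$). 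The main obstacle is surjectivity: that there are \emph{no} analytic functionals beyond these. I expect to obtain it either by a dimension count — combining a presentation $0\to V\to E_{d,h}\to L\to0$ (with $\dim_{\Qp}V=h$, $\dim_{\Cp}L=d$) with the left-exactness of $\Hom_{\mathrm{an}}(-,\Cp)$, reducing the count to showing that restriction to $V$ has image of dimension exactly $h-d$ — or, more directly, by pulling analytic functionals on $E_{d,h}\cong\go m_R^{d}$ back from those on $BW(R)$, which I expect to be topologically generated by the $\theta\circ\varphi^{n}$ ($n\in\mathbb{Z}$), and then collapsing this family via $\varphi^{h}=p^{d}$ to the $h$ representatives $0\le i\le h-1$. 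This completeness step, resting on the explicit analytic structure of Proposition~\ref{prop:Edh} and on the description of functionals on $BW(R)$, is where the real work lies; granting it, equality of dimensions upgrades the injective $\Phi$ to the desired canonical isomorphism.
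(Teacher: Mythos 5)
Your construction of the pairing $f\mapsto c\,\theta(f(m))$, its naturality, the reduction to $M=M_{d,h}$, and the injectivity of the resulting map $(\lambda_0,\dots,\lambda_{h-1})\mapsto \lambda_0\theta+\dots+\lambda_{h-1}\theta\circ\varphi^{h-1}$ are exactly the paper's own argument, which is itself only a sketch. The one genuinely hard step, surjectivity, is not proved in the paper either: it is deferred to the thesis (reference [5.2.7] of \emph{These}), where it rests on an explicit description of the analytic structure of $E(M)$ as a projective limit of rigid Banach spaces attached to submodules of $M$ --- this is essentially your second suggested route (identifying the analytic functionals on $BW(R)$ as generated by the $\theta\circ\varphi^n$ and collapsing via $\varphi^h=p^d$), and is the one to pursue; your first route, the dimension count via left-exactness of $\Hom_{\mathrm{an}}(-,\Cp)$ applied to a presentation $0\to V\to E_{d,h}\to L\to 0$, only bounds the dimension by $d+h$ and would still require determining which of the $h$ independent functionals on the \'etale space $V$ extend analytically to $E_{d,h}$, which is not easier than the original problem. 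So your proposal matches the paper up to precisely the point where both stop, and the completeness step you flag as the real work is indeed the entire content of the external reference.
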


See~\cite[5.2.7]{These} for a complete proof.

\begin{proof}[Sketch of proof]
The map~$F: M ⊗_{W(k)} \Cp → \Hom_{\mathrm{an}} (M, \Cp)$ associates to an
element~$x ⊗ λ$ the analytic function
\[ \application{E(M)}{\Cp}{\pa{f:M\rightarrow B^+\cris}}{λ· θ(f(x))}. \]
To prove that~$F$ is bijective, we may, as in Prop.~\ref{prop:dieud-BW},
assume that there exist integers~$0 ≤ d ≤ h$ such that~$M = M_{d,h}$.
In this case, we identify~$M_{d,h} ⊗ \Cp$ to~$ℂ_p^h$,
and thus~$F(λ_0,…,λ_{h-1}) = λ_0 θ + … + λ_{h-1} θ ∘ φ^{h-1}$.
This map is injective.

Finally, the surjectivity of~$F$ may be proven using an explicit computation
of the analytic structure of~$E(M)$ as a projective limit of rigid Banach
spaces associated to particular submodules of~$M$.
\end{proof}

For any effective Banach-Colmez space~$E$, let~$\dual{E}_\Cp$
be the $\Cp$-vector space of analytic morphisms from~$E$ to~$\Cp$,
and~$E_\Cp$ for the dual $\Cp$-vector space to~$\dual{E}_\Cp$.
The biduality morphism~$ι: E → E_\Cp$~is called the \emph{vector hull} of~$E$.
We see by taking coordinates that any morphism from~$E$ to a $\Cp$-vector space
factorizes uniquely through~$ι$.
In particular, according to proposition~\ref{prop:Hom-EM-C},
for any~$0 ≤ d ≤ h$,
the space~$(E_{d,h})_{ℂ_p}$ is canonically isomorphic to~$ℂ_p^h$,
with the hull map being given by $ι(x) = (θ(φ^{r}(x)))_{r=0,…,h-1}$.
\subsection{The universal extension of $\Cp$ by~$\Qp$}

The Banach-Colmez space~$E_1 = \acco {x ∈ B^+\cris, φ(x) = x}$ has the
canonical presentation~\cite[5.3.7.(ii)]{Fontaine1994Corps}
\begin{equation}\label{eq:pres-E1}
0 → ℚ_p(1) → E_1 →^{θ} \Cp → 0,
\end{equation}
where~$θ$ is the restriction to~$E_1$ of the reduction map~$θ: B^+\cris →
\Cp$.

This extension is universal in the following way.
Let~$V$ be a $h$-dimensional $ℚ_p$-vector space, $L$ be a
$d$-dimensional $\Cp$-vector space, and~$f: L → V ⊗ \Cp$ be a $\Cp$-linear
application. We may then form the fibre
product~$E(f)$ in the following diagram, where the second line
is~(\ref{eq:pres-E1}) tensored with~$V$:
\begin{equation}\label{eq:E(f)}
\xymatrix{
0\ar[r] & V(1)\ar[r] \ar@{=}[d] & E(f) \ar[r] \ar[d] & L \ar[r]
\ar[d]^{f} & 0\\
0\ar[r] & V(1)\ar[r] & V⊗_\Qp E_1\ar[r]^{V ⊗ θ}
  & V⊗_\Qp \Cp\ar[r] & 0
}
\end{equation}
Then~$E(f)$~is an effective spectral Banach space, and~(\ref{eq:E(f)}) is
a presentation of~$E(f)$ as an effective Banach-Colmez space. The Banach
space~$E(f)$ is connected if~$f(L) ∩ V = V$ (or equivalently, if the
transpose map~$\trans{f}: \dual{V}_{ℚ_p} → \dual{L}_{\Cp}$ is injective);
it is étale if~$L = 0$.

\begin{lem}\label{lem:cocycle}
Define a \emph{$p$-extractible} element of~$ℂ_p \acco{X}$
as an element that admits a $p^m$-th root in~$ℂ_p \acco{X}$ for all~$m$.

Let~$g ∈ ℂ_p\acco{X}$ be such that $g(0)=1$
and $g(x)^p/g(px)$ is $p$-extractible.
Then $g$~is the product of an exponential function
and a $p$-extractible element.
\end{lem}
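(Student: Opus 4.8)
The plan is to produce the two factors explicitly, by an infinite telescoping product built from the relation defining the given element $c := g(X)^p/g(pX)$, which by hypothesis is $p$-extractible with $c(0)=1$. Everything takes place in the algebra $\Cp\acco X$ of series of radius of convergence $\geq 1$; recall that a $p$-extractible element, having $p^m$-th roots for every~$m$, cannot have a zero of finite multiplicity in the open unit ball, hence has no zeros there at all. First I would show that $g$ itself has no zeros. Comparing multiplicities in $c = g(X)^p/g(pX)$, a zero of $g$ at~$\alpha$ of multiplicity~$\mu$ would force $g$ to vanish at~$p\alpha$ with multiplicity $p\mu$ (otherwise $c$ acquires a zero or a pole at~$\alpha$); iterating gives a zero at $p^k\alpha \to 0$ of multiplicity $p^k\mu \to \infty$, contradicting $g(0)=1$. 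Thus $g$ is a unit and $\ell := \log g = \sum_{n\geq 1} b_n X^n$ is a well-defined series of radius~$\geq 1$.

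Next I would introduce the candidate $p$-extractible factor. Since $c$ is $p$-extractible, for every~$k$ it has a unique $p^k$-th root $c^{1/p^k}$ with value~$1$ at the origin, and these principal roots are mutually compatible. Set
\[
W := \prod_{j \geq 0} \pa{c^{1/p^{j+1}}}(p^j X) \in \Cp\acco X.
\]
Taking formal logarithms, the coefficient of~$X^n$ in $\log W$ is $\gamma_n \sum_{j\geq 0} p^{-(j+1)} p^{jn}$, where $\gamma_n = (p-p^n)b_n$ is the coefficient of~$X^n$ in $\log c = p\,\ell(X) - \ell(pX)$. For $n=1$ one has $\gamma_1 = (p-p)b_1 = 0$, which is exactly what neutralizes the otherwise divergent geometric series in~$j$; for $n\geq 2$ the geometric series sums to $p^{-1}(1-p^{n-1})^{-1}$, and the $n$-th coefficient collapses to~$b_n$. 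Hence $\log W = \sum_{n\geq 2} b_n X^n = \ell - b_1 X$, so that $g = \exp(b_1 X)\cdot W$. Since $g$ and~$W$ lie in $\Cp\acco X$ and $W$ is a unit, the quotient $\exp(b_1 X) = g/W$ automatically lies in $\Cp\acco X$ (in particular $\abs{b_1} \leq p^{-1/(p-1)}$), and this is the required exponential factor.

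It then remains to see that $W$ is $p$-extractible. For each~$m$ I would form $W_m := \prod_{j\geq 0} \pa{c^{1/p^{j+1+m}}}(p^j X)$, again a (better-)convergent product in $\Cp\acco X$ because $c$ is $p$-extractible; raising it to the $p^m$-th power multiplies every exponent by~$p^m$ and returns~$W$, so $W_m$ is a $p^m$-th root of~$W$ in $\Cp\acco X$. Thus $W$ has $p^m$-th roots for all~$m$, and $g = \exp(b_1 X)\cdot W$ is the desired decomposition. The main obstacle is analytic rather than algebraic: one must justify convergence of these infinite products in the Fréchet topology of $\Cp\acco X$ (uniform convergence on every closed ball of radius~$<1$) and the interchange of product and logarithm. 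The crux is the vanishing $\gamma_1=0$ of the linear coefficient of $\log c$ --- it is precisely the single resonant mode $n=1$ of the operator $F \mapsto p\pa{F(X) - F(pX)}$ that the exponential factor absorbs, while every mode $n\geq 2$ is inverted by a convergent geometric sum, which is what makes the product a genuine element of radius~$\geq 1$.
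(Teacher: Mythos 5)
Your construction of the $p$-extractible factor is the same telescoping product $\prod_{j\geq 0} c^{1/p^{j+1}}(p^{j}x)$ that the paper uses (its $v_0$), and your argument is correct modulo the convergence checks you flag, which the paper likewise leaves implicit. The only difference is how the remaining factor is recognized as an exponential: the paper derives the functional equation $w(x)^p=w(px)$ for $w=g/v_0$ and takes a logarithmic derivative, whereas you compute the coefficients of $\log W$ directly and isolate the resonant linear mode --- essentially the same argument.
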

\begin{proof}
%
Write~$g(x)^p/g(px) = u_m(x)^{p^m}$ for all~$m ∈ ℕ$.
Then, for any~$n$, the product $v_n(x) = ∏_{m ≥ 0} u_{n+m+1} (p^m x)$
converges, and $v_n ∈ ℂ_p \acco{X}$.
Moreover, $v_{n+1}^p = v_n$ by construction,
so that $v_0$~is $p$-extractible.
Let~$w = g/v_0$.

Then from the relations~$v_n(x) = u_{n+1}(x) v_{n+1}(px)$
and~$w = v_{n}^{p^{n}}/g$ we deduce~$w(x)^p = w(px)$.
Taking the logarithmic derivative of this relation
yields $(w'/w)(x) = (w'/w) (px)$,
so that~$w'(x) = w(x) w'(0)$ for all~$x$.
Therefore $w$~is an exponential function.
\end{proof}
\begin{prop}\label{prop:extensions}
Any analytic extension of a $d$-dimensional $\Cp$-vector space~$L$ by
a~$h$-dimensional $ℚ_p$-vector space~$V$ is of the form~(\ref{eq:E(f)}).
\end{prop}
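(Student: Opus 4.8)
The plan is to reduce the statement to the universal extension $E_1$ and then to apply the global inversion theorem. Since $\Ext^1(-,-)$ is biadditive in the category of effective spectral Banach spaces and the construction $f\mapsto E(f)$ of diagram~(\ref{eq:E(f)}) is additive in $V$, in $L$ and in $f$, I would first choose a $\Qp$-basis of $V$ and a $\Cp$-basis of $L$ and thereby reduce to the case $V=\Qp(1)$, $L=\Cp$; as an effective spectral Banach space $\Qp(1)$ is étale of $\Qp$-dimension one, hence isomorphic to $\Qp$ (Example~\ref{ex:an}(i)), which is what accounts for the Tate twist in~(\ref{eq:E(f)}). It then suffices to show that every analytic extension $0\to\Qp(1)\to E\xrightarrow{\pi}\Cp\to0$ fits, compatibly, into~(\ref{eq:E(f)}) for a unique $f\in\Hom_\Cp(\Cp,\Cp)=\Cp$.

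The heart of the argument is the construction of an analytic homomorphism $\Phi\colon E\to E_1$ whose restriction to $\Qp(1)$ is the identity. Granting such a $\Phi$, the composite $\theta\circ\Phi\colon E\to\Cp$ kills $\Qp(1)$ (because $\theta$ vanishes on $\Qp(1)\subset E_1$ by~(\ref{eq:pres-E1})) and hence descends to a $\Cp$-linear map $f\colon L\to\Cp$; the pair $(\Phi,\pi)$ then defines a morphism $E\to E(f)=E_1\times_{\theta,f}\Cp$ inducing the identity on both $\Qp(1)$ and $\Cp$, so it is bijective and, $E$ and $E(f)$ being prorigid (each is an extension of the prorigid $\Cp$ by the étale $\Qp(1)$), an isomorphism by the global inversion theorem. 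Reassembling the additive reduction then produces the general $f\colon L\to V\otimes\Cp$ and the presentation~(\ref{eq:E(f)}).

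To build $\Phi$ I would use the description of $E_1$ as $\Hom(\Qp,\hat{\mathbb G}_m(\O_\Cp))=\varprojlim_{[p]}\hat{\mathbb G}_m(\O_\Cp)$ from Proposition~\ref{prop:EM-BCE}(i), so that giving $\Phi$ is the same as giving a compatible system of analytic characters $\chi_n\colon E\to\hat{\mathbb G}_m$ with $\chi_{n+1}^{\,p}=\chi_n$ and $\chi_n|_{\mathbb Z_p(1)}$ the tautological system of $p$-power roots of unity. Restricting to the unit ball $\pi^{-1}(\O_\Cp)$—a prorigid group—I would produce, from a local analytic splitting of $\pi$ over the disk $\O_\Cp$ (whose existence uses that the base is a rigid disk and the kernel is étale), a normalised function $g\in\Cp\acco X$ with $g(0)=1$ encoding that trivialisation. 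The compatibility of this datum under the analytic multiplications by $p$ and by $\tfrac1p$ on $E$, which cover $x\mapsto px$ and $x\mapsto x/p$ on the base and match the $p$-th power transition of the tower, forces the ratio $g(x)^p/g(px)$ to be $p$-extractible: the defect is measured by the infinitely $p$-divisible group $\Qp(1)$ of roots of unity. Lemma~\ref{lem:cocycle} then writes $g=\exp(cx)\cdot e$ with $e$ $p$-extractible. The exponential factor is precisely the tautological character of $\hat{\mathbb G}_m$ pulled back along $\pi$—the local shadow of $E_1$—while the $p$-extractible factor $e$ supplies its own compatible system of $p$-power roots, hence a genuine analytic character; assembling these roots yields the tower $(\chi_n)$, and thus $\Phi$, with $\Phi|_{\Qp(1)}=\mathrm{id}$ built in.

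The main obstacle is exactly this middle step: extracting the local analytic datum $g$ from the extension and checking that the group-law defect, transported through multiplication by $p$ and by $\tfrac1p$, is correctly encoded by $g(x)^p/g(px)$, so that Lemma~\ref{lem:cocycle} applies. Once the lemma is in hand the exponential part manifestly comes from $E_1$ and the $p$-extractible part lifts through the tower, so that the remaining points—that $\Phi$ is a homomorphism, that $\Phi|_{\Qp(1)}$ is the identity, and that $(\Phi,\pi)$ is bijective—are formal, and the global inversion theorem closes the argument.
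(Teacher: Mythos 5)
Your proposal follows essentially the same route as the paper: reduce by additivity to an extension of $\O_{\Cp}$ by $\mathbb{Z}_p(1)$, build a tower $g_{n+1}^p=g_n$ of analytic lifts of the tautological characters of $\mathbb{Z}_p(1)$, observe that $g_0(x)^p/g_0(px)$ is $p$-extractible, and split off the exponential with Lemma~\ref{lem:cocycle} before closing with the global inversion theorem. The only difference is one of packaging: you phrase the conclusion as a morphism $\Phi\colon E\to E_1$ identifying $E$ with the fibre product $E(f)$, whereas the paper directly identifies $\Sp A$ with a closed ball of $\go m_R$ and sends it to a lattice of $E_1$ by $s\mapsto\log[s]$.
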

\begin{proof}
Let~$(u_i)$ be a basis of~$L$. Then the fibre product~$E_i = E ×_L (\Cp
u_i)$ is an analytic extension of~$\Cp$ by~$V$, and proving the
proposition for all~$E_i$ proves it for~$E$. Therefore, we may assume
that~$L = \Cp$. Moreover, let~$(f_j)$ be a basis of the dual of~$V$.
Then, for each~$j$, the push-out~$E^j = E +_{V} \Qp$ has an analytic
structure, its affine algebra being the quotient of that of~$E$ by the
closed ideal of the functions that are zero on~$\Ker f_j$. Therefore, the
extension~$0 → \Qp → E^j → L → 0$ is analytic, and proving the result for
all~$E^j$ proves it for~$E$.

We thus see that it suffices to prove the case where~$L = \Cp$ and~$V =
ℚ_p$; by multiplication by a suitable power of~$p$, we may even reduce to
the case of extensions of~$\O_\Cp = \Sp \Cp\acco{X}$ by the spectral
group~$ℤ_p$.

Let~$S = \Sp A$ be such an extension.
The diagram~$ℤ_p(1) ↪ S ↠ \O_{ℂ_p}$
corresponds to continuous algebra morphisms
$ℂ_p\acco{X} ↪ A ↠^{ρ} \ro C^0(ℤ_p(1), ℂ_p)$.
Let~$ε = (ε_n)_{n ≥ 0}$ be a topological generator of~$ℤ_p(1)$,
and define~$i_n: ℤ_p -> ℂ_p$ by~$i_n(m) = ε_n^m$.
Then $A$~is topologically generated over~$ℂ_p\acco{X}$
by elements~$f_n$ such that~$ρ(f_n) = i_n$.
Multiplying~$f_n$ by elements of~$ℂ_p\acco{X}$,
we can assume that~$f_{n+1}^p ≡ f_n \pmod{p}$.
Therefore, for all~$n$, the sequence~$(f_{n+m})^{p^m}$
converges to some element~$g_n$ of~$A$.
The family~$(g_n)$ again topologically generates~$A$;
moreover~$g_{n+1}^p = g_n$ by construction,
while~$g_0 ∈ ℂ_p\acco{X}$, $g_0(0) = 1$,
and $g_0$~takes its values in~$\O_{ℂ_p}^{×}$.

For any~$n$, $g_n^p$ and~$g_n(px)$ coincide on~$ℤ_p(1)$,
so that~$ρ(g_n^p/g_n ∘ [p]) = 1$
and~$u_n = g_n(x)^p / g_n(px) ∈ ℂ_p\acco{X}$.
By construction, $g_0(x)^p/g_0(px) = u_0(x)$
and $u_0$~is extractible as defined in Lemma~\ref{lem:cocycle}.
Therefore there exists a sequence~$v_n$ in~$ℂ_p\acco{X}$
and~$a ∈ \ro O_C$ with~$v_p(a) ≥ 1/(p-1)$
such that~$g_0(x) = \exp (a\, x) v_n^{p^n}$.
Replacing $g_n$ by~$v_n g_n$ we may assume that~$g_0(x) = \exp(a\, x)$.%
\footnote{A different way to reach the same result
is to consider instead the cocycle~$\frac{g_0(x+y)}{g_0(x)g_0(y)}$
\cite[II.6.1]{Lazard1975CFG}\cite[5.5.4]{These};
we find it simpler to use its multiplication-by-$p$ analogues instead.}

A point~$s ∈ \Sp A$ is thus determined by the values~$s_n = g_n(s) ∈ \Cp$,
satisfying the condition~$v_p(s_0) ≥ v_p(α)$ and~$s_{n+1}^p = s_n$.
Therefore, $\Sp A$~is homeomorphic to a closed ball of~$\go m_R$.
The map~$\Sp A → B^+\cris, s ↦ \log [s]$ is then an analytic isomorphism
between $\Sp A$ and a lattice of~$E_1$.
\end{proof}
\subsection{Structure of effective Banach-Colmez spaces of dimension one}

The following results give a full description of the category of
Banach-Colmez spaces ``of dimension one'', \emph{i.e.} having a
presentation $0 → V → E → \Cp → 0$. Namely, for these spaces, there
exists an integer~$h$ such that the connected-étale sequence is of the
form
\begin{equation}
0 → E_{1/h} → E → π_0(E) → 0;
\end{equation}
since this sequence splits, the space~$E$ is (non-canonically)
isomorphic to~$E_{1/h} ⊕ π_0(E)$, with~$π_0(E)$ being a
finite-dimensional $\Qp$-vector space.

\begin{prop}\label{prop:structure}
Let~$E$ be an effective Banach-Colmez space having a presentation of
dimension one. Assume that~$E$ is connected and not isomorphic to~$\Cp$.
Then there exists an integer~$h ≥ 1$ such that $E$~is isomorphic
to~$E_{1,h}$.
\end{prop}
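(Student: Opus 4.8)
The plan is to combine the classification of extensions from Proposition~\ref{prop:extensions} with the $p$-divisible group realisation of the slope spaces, and to finish by the global inversion theorem. First I would use Proposition~\ref{prop:extensions} to write $E ≅ E(f)$ for some $\Cp$-linear map $f : \Cp → V ⊗ \Cp$ with $\dim_{\Qp} V = h$. The assumption that $E$ is connected means, by the discussion following~(\ref{eq:E(f)}), that $\trans{f} : \dual{V}_{\Qp} → \dual{L}_{\Cp}$ is injective; concretely, writing $f(1) = (a_1,…,a_h)$ in a $\Qp$-basis of $V$, this says that $a_1,…,a_h$ are $\Qp$-linearly independent in $\Cp$. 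Since $E$ is not isomorphic to $\Cp$ we must have $V \neq 0$, hence $h ≥ 1$. As $E_{1,h}$ is itself connected, of dimension one, and carries a presentation of height $h$ (the presentation attached to a lift of the $p$-divisible group of $M_{1,h}$, described after Proposition~\ref{prop:Hom-EM-C}), it is one of these spaces $E(f)$; the statement therefore reduces to showing that the analytic isomorphism class of a connected $E(f)$ with $\dim_{\Qp} V = h$ is independent of $f$.

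It is worth stressing that this independence is not formal. An analytic isomorphism $E(f) → E(f')$ is not required to respect the presentation maps $V(1) ↪ E ↠ \Cp$; at the level of extensions the spaces $E(f)$ and $E(f')$ are isomorphic only when the $\Qp$-spans $∑ \Qp a_i$ and $∑ \Qp a'_i$ are homothetic under $\Cp^×$, so the proposition genuinely produces isomorphisms that do not come from the linear algebra of extensions — they amount to endowing the abstract space $E(f)$ with the hidden functionals $θ ∘ φ^r$ that are visible on $E_{1,h} ⊂ B^+\cris$. The route I would take to obtain them is to realise $E(f)$ through a $p$-divisible group. By Proposition~\ref{prop:EM-BCE}(i) the space $E(M)$ depends only on the Dieudonné module $M$ and not on the chosen smooth lift $G$; and over $\Fpb$ the unique Dieudonné module of a connected $p$-divisible group of dimension one and height $h$ is $M_{1,h}$, whose associated space is $E(M_{1,h}) = E_{1,h}$ by Proposition~\ref{prop:EM-GammaR}. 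It thus suffices to exhibit the given $E(f)$ as $E(M_{1,h})$: one produces a smooth lift $G$ of the slope-$1/h$ formal group whose logarithmic presentation, via Proposition~\ref{prop:EM-BCE}(ii), recovers the prescribed datum $f$, and then reads off $E(f) = E(M_{1,h}) = E_{1,h}$.

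I expect the main obstacle to be exactly this realisation step: showing that every connected datum $f$ — equivalently every $h$-dimensional $\Qp$-subspace $∑ \Qp a_i ⊂ \Cp$ — arises as the Hodge–Tate line of some lift $G$, which is the surjectivity of the period map onto the connected locus. This is where the explicit analytic structure of $E_{1,h}$ inside $BW(R) ⊂ B^+\cris$ (Proposition~\ref{prop:Edh}) must be used, through an explicit parametrisation of its points, rather than through formal properties of the category; equivalently one builds the analytic bijection $E_{1,h} → E(f)$ directly, interpreting elements of $E_{1,h}$ by means of the $\log[\,·\,]$ and Teichmüller description of $E_1$ obtained at the end of the proof of Proposition~\ref{prop:extensions}. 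Once a bijective analytic morphism between $E_{1,h}$ and $E(f)$ is in hand, both spaces are prorigid, so the global inversion theorem upgrades it to an analytic isomorphism, completing the proof.
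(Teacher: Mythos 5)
Your reduction is sound as far as it goes: by Proposition~\ref{prop:extensions} one may write $E = E(f)$ for a non-degenerate $f$, and the content of the proposition is indeed that the analytic isomorphism class of a connected $E(f)$ of height $h$ does not depend on $f$. But the proposal stops exactly where the proof has to begin. The step you defer --- that every $h$-dimensional $\Qp$-subspace $\lambda(V) \subset \Cp$ arises as the Hodge--Tate period of a lift $G$ of the formal group of slope $1/h$, ``the surjectivity of the period map onto the connected locus'' --- is not a routine verification: it is the Gross--Hopkins/Laffaille surjectivity theorem, it is nowhere established in the paper, and it is of essentially the same depth as the fundamental lemma the paper is trying to reprove, so invoking it would be close to circular. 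Saying that one ``builds the analytic bijection $E_{1,h} \to E(f)$ directly'' by means of $\log[\,\cdot\,]$ does not discharge this: no such bijection is constructed, and that construction is precisely where all the difficulty lives.

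The paper closes this gap with Lemma~\ref{lem:E1h}: for $\lambda : V \hookrightarrow \Cp$ the map $\iota \otimes \lambda : E_{1/h} \otimes_{\Qp} V \to \Cp^h$ is surjective with kernel a line over the division algebra $D_{1/h}$, proved by reducing modulo suitable lattices and explicitly counting the $p^{2h}$ solutions of the resulting polynomial system over $R$. A generator $a = (a_1, \dots, a_h)$ of that kernel is then used to write the isomorphism concretely: $g(x) = t^{-1} \sum a_i x_i$ maps $E \subset E_1^h$ into $B^+\cris$, satisfies $\varphi^h(g(x)) = p\, g(x)$, is injective because $\det(\varphi^r(a_i)) \neq 0$, and is an analytic isomorphism onto $E_{1,h}$ by global inversion. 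Any proof along your lines needs a substitute for this counting lemma (or, equivalently, for the period-map surjectivity), and the proposal supplies neither; as it stands it is an accurate description of what must be proved rather than a proof.
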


The proof uses the following lemma.
\begin{lem}\label{lem:E1h}
Let~$V$ be a $h$-dimensional $ℚ_p$-vector subspace of~$\Cp$. Write~$λ: V →
\Cp$ the canonical injection and~$ι: E_{1/h} → ℂ_p^h$ the vector hull
of~$E_{1/h}$. Then the map~$ι ⊗ λ: E_{1/h} ⊗_{\Qp} V → ℂ_p^h$ is surjective, and
its kernel is a $h$-dimensional vector space over~$ℚ_{p^h}$.
\end{lem}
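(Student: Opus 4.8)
The plan is to make the map completely explicit, isolate a $\Qph$-linear structure that dictates the shape of the kernel, and then reduce both assertions (surjectivity, and that the kernel is $h$-dimensional over $\Qph$) to an explicit solvability-and-counting statement for a system of analytic equations. First I would fix a $\Qp$-basis $v_1,…,v_h$ of $V$, so that $E_{1/h}⊗_{\Qp}V ≅ E_{1/h}^h$, and use the hull formula $ι(x)=(θ(φ^r x))_{r=0,…,h-1}$ to write $ι⊗λ$ as $(x_1,…,x_h)↦\bigl(∑_{j=1}^h v_j\,θ(φ^r x_j)\bigr)_{r=0,…,h-1}$. Two structural inputs then organize the problem. (a) By Proposition~\ref{prop:Edh}(iv) multiplication by any $c∈\Qph$ is analytic on $E_{1/h}$, and since $φ^r(cx)=φ^r(c)φ^r(x)$ one gets $ι(cx)=\mathrm{diag}(σ_0(c),…,σ_{h-1}(c))\,ι(x)$, where $σ_r:=θ∘φ^r|_{\Qph}$ are the $h$ distinct embeddings $\Qph↪\Cp$. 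Hence $ι⊗λ$ is equivariant for the $\Qph$-action on the left factor and the diagonal action on $ℂ_p^h$; in particular both $\operatorname{Im}(ι⊗λ)$ and $\ker(ι⊗λ)$ are $\Qph$-vector spaces, which is what produces the $\Qph$-structure on the kernel. (b) By Proposition~\ref{prop:Hom-EM-C} the map $F$ is injective, i.e. the functionals $θ∘φ^r$ are $\Cp$-linearly independent on $E_{1/h}$.

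Input (b) yields one half of the statement by duality: if a $\Cp$-linear form $μ=(μ_r)$ on $ℂ_p^h$ vanishes on the image, then, fixing a nonzero $v_1$ and dividing by it, $∑_r μ_r\,θ(φ^r x)=0$ for all $x∈E_{1/h}$, whence $μ=0$ by the injectivity of $F$. So the image meets no proper $\Cp$-hyperplane and therefore $\Cp$-spans $ℂ_p^h$. This is the easy lower bound; by itself it only gives $\Cp$-density of the image, not surjectivity of $ι⊗λ$ as a map of $\Qp$-vector spaces.

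The genuine content — and the step I expect to be the main obstacle — is upgrading this spanning statement to honest surjectivity and pinning the kernel down to dimension exactly $h$ over $\Qph$ (equivalently $h^2$ over $\Qp$). I would do this by the explicit count advertised in the introduction: using the bivector model $E_{1/h}≅\go m_R$ of Proposition~\ref{prop:Edh}(i), together with the analyticity of $θ$ and $φ$ on $BW(R)$ from Examples~\ref{ex:an}(iv), I would write the system $∑_j v_j\,θ(φ^r x_j)=y_r$ ($r=0,…,h-1$) as explicit equations in $R$ and solve it by successive approximation, lifting an approximate solution through the filtration by powers of the maximal ideal. Existence of a solution for every target $y$ gives surjectivity; the solution set is then a torsor under the solutions of the associated homogeneous system, and the same count shows that this homogeneous solution space is finite-dimensional over $\Qp$, of dimension $h^2$. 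Combined with the $\Qph$-equivariance from (a), the kernel is $h$-dimensional over $\Qph$, as claimed.

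I would emphasize that one cannot shortcut the last paragraph by the naive bookkeeping ``dimension and height are additive, so $\ker(ι⊗λ)$ has dimension $0$ and height $h^2$, hence is étale of $\Qp$-dimension $h^2$'': the additivity of these invariants is precisely what the present lemma is being used to establish (it is the reformulation of the fundamental lemma), so such an argument would be circular. It is exactly the hands-on solution of the defining equations — the analytic existence statement and the exact count of homogeneous solutions — that avoids the circularity, the $\Qph$-equivariance serving only to reduce the bookkeeping and to supply the field structure on the answer.
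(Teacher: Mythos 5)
Your outline correctly locates the difficulty (an explicit solvability-and-counting argument) and your observation~(a) is sound: the $\Qph$-equivariance $\iota(cx)=\mathrm{diag}(\sigma_0(c),\dots,\sigma_{h-1}(c))\iota(x)$ does give the kernel its $\Qph$-structure, and your warning about circularity with additivity of dimension and height is well taken. But there is a genuine gap: the entire content of the lemma is delegated to the sentence ``solve it by successive approximation \dots the same count shows that this homogeneous solution space is \dots of dimension $h^2$'', and no count is ever set up or performed. The number $h^2$ is asserted, not derived, and it is precisely the number one cannot guess without doing the computation. Moreover, the approximation scheme as you describe it (lifting through powers of the maximal ideal, i.e.\ reduction modulo~$p$) is not the tractable one: modulo~$p$ the homogeneous residue system would have to be shown to have exactly $p^{h^2}$ solutions, and you have specified no lattices $\ro E\subset E_{1/h}^h$, $\ro S\subset\Cp^h$ with $f(\ro E)\subset\ro S$ for which the induction could even be stated, let alone for which the graded map is surjective.

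The missing structural ingredient is the \emph{full} action of the division algebra $D_{1/h}$ of invariant $1/h$, not just of its commutative subfield $\Qph$: the uniformizer $\pi$ of $D_{1/h}$ acts as $\varphi$ on $E_{1/h}^h$ and as the twisted shift on $\Cp^h$, so that $f$ is $D_{1/h}$-linear and one may reduce modulo~$\pi$ (i.e.\ modulo~$\varphi$) rather than modulo~$p$. After choosing lattices adapted to this filtration (this requires the elementary but non-trivial analysis of the function $\rho\mapsto\min_n\{p^{nh}\rho-n\}$ to arrange $f(\ro E)\subset\ro S$ with $\ro E$, $\ro S$ stable under the maximal order $\ro D_{1/h}$), the residue map $\overline{f}:\ro E/\varphi(\ro E)\to\ro S/\varphi(\ro S)$ is $\Fph$-linear and the fibres of $\overline{f}$ become the solution sets of an explicit finite system of Artin--Schreier-type equations over the perfect field $R$, which one can actually count: $p^{2h}$ solutions in $\ro E$ for every target, of which $p^{h}$ lie in $\varphi(\ro E)$, whence surjectivity of $\overline{f}$ and $\dim_{\Fph}\ker\overline{f}=1$; a Nakayama-type argument over $\ro D_{1/h}$ then lifts this to surjectivity of $f$ and $\dim_{D_{1/h}}\ker f=1$, i.e.\ $\dim_{\Qph}\ker f=h$. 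Without the $\pi$-adic (rather than $p$-adic) reduction and without exhibiting the compatible lattices, your successive approximation cannot be launched and your ``count'' has no finite object to count; this is the step you would need to supply.
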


\begin{proof}
The strategy involves the following steps:
\begin{enumerate}
\item write~$f$ as a left $D_{1/h}$-linear map, where $D_{1/h}$ is
the division algebra over~$\Qp$ with Brauer invariant~$1/h$;
\item compute lattices~$\ro E$ of~$E_{1/h}$ and~$\ro S$ of~$ℂ_p^{h}$
such that~$f(\ro E) ⊂ \ro S$,
and thus reduce the problem modulo~$p$;
\item prove that the reduced map~$\overline{f}: \ro E / π \ro E → \ro S
/ π \ro S$ (where $π$~is a uniformizer of~$D_{1/h}$) is surjective;
\item count the elements of~$\Ker \overline{f}$ and thus proving that
its is one-dimensional over~$\Fph$.
\end{enumerate}
The points~(iii) and~(iv) are proven here only in the special case where
$V$~has a basis consisting of elements~$(λ_i)$ with~$v_p(λ_i) = 0$. The
full proof, similar in spirit but somewhat longer, is detailed
in~\cite[6.2.3]{These}.

\emph{Step (i).}
The division algebra~$D_{1/h}$, having Brauer invariant~$1/h$ over~$ℚ_p$,
is the non-commutative algebra generated over~$\Qph$
by a uniformizer~$π$ satisfying the relations~$π^h=p$
and, for all~$x ∈ \Qph$,~$π ·x = φ(x)· π$.

The division algebra~$D_{1/h}$ acts on $E_{1/h}$ with $π$ acting by the
Frobenius morphism~$φ$, and on~$ℂ_p^h$ by~$π(x_0,…,x_{h-1}) =
(x_1,…,x_{h-1},p x_0)$

Let~$(λ_1, …, λ_h)$ be a basis of~$V$; then the map~$ι ⊗ λ$ may be
written as
\begin{equation}
f: \application{E_{1/h}^h}{ℂ_p^h}
 {(x_1,…,x_h)}{\pa{∑_{i=1}^{h} λ_i θ φ^r(x_i)}_{r=0,…,h-1}}.
\end{equation}
From this, we immediately see that~$f(φ(x)) = π(f(x))$; therefore, the
map~$f$ is $D_{1/h}$-linear.

\emph{Step (ii).}
By selecting an appropriate basis of~$V$ over~$ℚ_p$,
and up to multiplication by powers of~$p$,
we may assume that the~$λ_i$ are sorted by increasing $p$-adic valuation,
and that~$v_p(λ_i) ∈ [0,1[$.
Finally, the reductions of all the~$λ_i$ with same $p$-adic valuation
modulo the appropriate ideal of~$\O_{ℂ_p}$
are linearly independent over~$\Fp$.

For any~$r ∈ ℝ$, define
\begin{equation}
g(ρ) = \min \acco { v_p ( θ(x)),\, x ∈ R, v_R(x) ≥ ρ }
=\min \acco { p^{nh} ρ - n,\, n ∈ ℤ}.
\end{equation}
Then $g: ℝ → ℝ$~is a strictly monotonous function,
piece-wise affine, with slope~$p^{nh}$ on intervals of width~$p^{-nh}$.
This means that it is possible to find elements~$ρ_1, …, ρ_h ∈ ℝ$
such that, for all~$r = 0, …, h-1$, the quantity
$τ_r = g(p^r ρ_i) + v(λ_i)$ does not depend on~$i ∈ \acco{1, …, h}$.
Therefore~$f(\ro E) ⊂ \ro S$,
where $\ro E$ and~$\ro S$ are the lattices of~$E_{1/h}^h$ and~$ℂ_p^h$
defined by
\begin{equation}
\begin{aligned}
\ro E &= \acco{(x_i)_{i=1,…,h} ∈ E_{1/h}^h, v_R(x_{i}) ≥ ρ_i},\\
\ro S &= \acco {(y_r)_{r=0,…,h-1} ∈ ℂ_p^h,
  v_p(y_r) ≥ τ_r}.
\end{aligned}
\end{equation}

%
%
The maximal order~$\ro D_{1/h}$ of~$D_{1/h}$ is
generated by~$ℤ_{p^h}$ and~$π$; it is separated an complete for the
$p$-adic topology and~$\ro D_{1/h} / π \ro D_{1/h} = \Fph$.
Both~$\ro E$ and~$\ro S$
are stable under the action of~$\ro D_{1/h}$, and the map~$f$ reduces to
a $\Fph$-linear map~$\overline{f}: \ro E / φ(\ro E) → \ro S / φ(\ro S)$.
The surjectivity of~$f$ is equivalent to that of~$\overline{f}$, and the
dimension (finite or not) of~$\Ker f$ over~$D_{1/h}$ is equal to the
dimension of~$\Ker \overline{f}$ over~$\Fph$.

\emph{Step (iii).}
For any~$x ∈ \ro E$, only a finite number of terms of the series~$f(x)$
do not belong to~$π(\ro S)$;
therefore, the Lemma reduces to counting the
solutions of a system of polynomials over~$\go m_R$.

We address here only the case where all~$v_i$ are zero.
In this case, the images~$\overline{λ_i}$ of the~$λ_i$
in~$\overline{\Fp}$ are $\Fp$-linearly independent, and we may write
\begin{equation}
\ro E = \acco { x ∈ R^{h}, v_R(x_i) ≥ \frac{1}{p^h -1} },
\qquad \ro S = ⨁_{r=0}^{h-1} p^{\frac{p^{r}}{p^h -1}} \ro O_\Cp.
\end{equation}
Let~$x = (x_1,…,x_h) ∈ \ro E$; then
\begin{equation}
\overline{f}(x) =
  \pa{∑_{i=1}^h λ_i ∑_{n ∈ ℤ} p^{-n} θ\pa{[x^{p^{nh+r}}]}}_{r=0,…,h-1};
\end{equation}
looking at the $r$-th component.
the only non-zero terms of the series modulo~$π \ro S$
are: for~$r = 0$, the terms with~$n ∈ \acco{0,1}$;
for~$r = h-1$, the terms with~$n ∈ \acco{-1,0}$;
for all other~$r$, only the term with~$n = 0$.

For all~$z ∈ \O_\Cp$, let $\widehat{z}$ be an element of~$R$
such that~$\widehat{z}^{(0)} = z$.
For~$b = (b_0,…,b_{h-1}) ∈ \ro S / π \ro S$,
the equation~$\overline{f}(x_1,…,x_h) = b$ is then equivalent to
\begin{equation}\label{eq:fbar-x}
\begin{aligned}
∑ \widehat{λ_i} \pa{x_i + \widehat{p}^{-1} x_i^{p^h}} &= \widehat{b_0},\\
∑ \widehat{λ_i} x_i^{p^r} &= \widehat{b_r} & \text{for $r=1,…,h-2$,}\\
∑ \widehat{λ_i} \pa{\widehat{p} \widehat{x_i}^{p^{-1}} + x_i^{p^{h-1}}}
  &= \widehat{b_{h-1}}.
\end{aligned}
\end{equation}
Since $R$~is a perfect field of characteristic $p$,
the change of variables $x_i = \widehat{p}^{1/(p^h -1)} y_i^p$,
$λ_i = μ_i^{p^h}$
yields the equivalent equations in the variables~$y_i ∈ R$,
for some constants~$c_0, …, c_{h-1} ∈ R$:
\begin{equation}\label{eq:fbar-y}
\begin{aligned}
∑ μ_i^{p^{h-r-1}} y_i &= c_r& \text{for $r=1,…,h-2$,}\\
∑ μ_i^{p^{h-1}} (y_i + y_i^{p^{h}}) &= c_0 ,\\
∑ μ_i^{p^{h}} (ϖ y_i + y_i^{p^{h}}) &= c_{h-1},
\end{aligned}
\end{equation}
where~$ϖ$ is an element of~$R$ such that~$v_R(ϖ) = \frac{p-1}{p}$.
Since the $λ_i \pmod{p}$ are $\Fp$-linearly independent,
we can eliminate $h-2$~variables using the linear equations,
thus reducing to two equations in~$(z_1, z_2)$ of the following form:
\begin{equation}\label{eq:fbar-z}
\begin{aligned}
z_1^{p^{h}} + α_1 z_1 +  α_2 z_2 &= c,\\
z_2^{p^{h}} + ϖ β_1 z_1 + ϖ β_2 z_2 &=  c',\\
\end{aligned}
\end{equation}
where~$α_1β_2 - α_2 β_1$~is a unit in~$R$.
This has exactly $p^{2h}$ solutions in~$\Frac R$,
all of which are integral over~$R$ and therefore belong to~$R$.
Therefore $\overline{f}$~is surjective.

\emph{Step~(iv).}
It remains to compute the dimension of the kernel of~$\overline{f}$.
Equation~\ref{eq:fbar-x} has exactly $p^{2h}$ solutions in~$\ro E$;
this must be divided by the number solutions in~$φ(\ro E)$,
which are the solutions such that~$v_R(x_i) ≥ p/(p^h -1)$.
Let~$ξ ∈ R$ such that~$ξ^{p^{h}-1} = ϖ$;
then the change of variables $z_i = ξ w_i$
linearizes the second equation of~(\ref{eq:fbar-y}).
Therefore this system has exactly~$p^h$ solutions,
so that the kernel of~$\overline{f}$ has dimension~$1$ over~$\Fph$.

Finally, since~$\overline{f}$ is the reduction of~$f$ modulo the maximal
ideal, $f$~is surjective and its kernel is a line over
the division algebra~$D_{1/h}$.
\end{proof}

\begin{proof}[Proof of Proposition~\ref{prop:structure}]
The presentation of~$E$ of dimension one corresponds
by~\ref{prop:extensions} to a $\Cp$-linear map~$f: \Cp → \dual{V}_{\Qp}
⊗_{\Qp} \Cp$ whose transpose~$λ: V → \Cp$ is injective.
Lemma~\ref{lem:E1h} shows that~$ι ⊗ λ: E_{1,h} ⊗ V → ℂ_p^h$ is injective
with kernel of dimension one over the division algebra~$D_{1/h}$. This
means that $\Ker (ι ⊗ λ)$~is generated by an element~$a = (a_1,…,a_h)$,
with~$a_i ∈ E_{1/h}$.

Define~$u: \Qph^h → E_{1/h}^h$ as the linear map~$u(x_0,…,x_{h-1}) = ∑
x_i φ^i(a)$, and~$δ = \det u = \det (φ^r(a_i))_{r,i} ∈ B^+\cris$. Then
$δ$~is also the determinant of the map~$D_{1/h} → \Ker (ι ⊗ λ), c ↦ c ·
a$. Since $a$~is a generator of~$\Ker(ι ⊗ λ)$, we have~$δ ≠ 0$.


The constuction~(\ref{eq:E(f)}) makes~$E$ a sub-space of~$E_1 ⊗ V =
E_1^h$. Let~$t$ be a topological generator of~$ℤ_p(1) ⊂ E_1$.
For any~$x = (x_1, …, x_h) ∈ E_1^h$, define~$g(x) = t^{-1} ∑ a_i
x_i$. The map~$g$ is analytic by construction and verifies~$φ^h(g(x)) =
pg(x)$. It only remains to show that $g(E) ⊂ B^+\cris$, and that~$g: E →
E_{1,h}$ is an isomorphism.

Let~$x = (x_1,…,x_h) ∈ E ⊂ E_1^h$ and~$y = g(x)$.
Then $t\,y = ∑ a_i x_i$, and therefore for any~$r ≥ 0$,
$θ(t\,φ^r(y))= p^{-r} θ(φ^{r}(ty)) = 0$,
which means that~$t φ^{r}(y) ∈ \Fil^1 B^+\cris$.
By~\cite[5.3.7]{Fontaine1994Corps}, this implies that~$y ∈ B^+\cris$.
Since~$φ^r(t^{-1} x_i) = t^{r-1} x_i$,
the vector~$(φ^r(y))$ is the product of the matrix~$(φ^r(a_i))$
with the vector~$(t^{-1} x_i)$.
From~$δ ≠ 0$ we thus deduce that $g$~is injective.
Since $g$~is also a morphism of presentations of effective
Banach-Colmez spaces between~$E$ and~$E_{1/h}$, it is easy to check
that it is surjective, and therefore an analytic isomorphism.
\end{proof}

\begin{cor}
Let~$E$ be a Banach-Colmez space having a presentation~$0 → V → E → \Cp →
0$. Then there exists an integer~$h$ such that the connected-étale
sequence for~$E$ is isomorphic to
\[ 0 → E_{1/h} → E → π_0(E) → 0. \]
\end{cor}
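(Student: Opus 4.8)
The plan is to deduce the corollary almost immediately from the connected–étale sequence (the Connected-étale sequence proposition) combined with the structure theorem (Proposition~\ref{prop:structure}). The key point is that the connected part $E^0$ of a one-dimensional Banach-Colmez space is itself a connected one-dimensional Banach-Colmez space, so the structure theorem applies to it directly and identifies it with some $E_{1/h}$.

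**First I would** apply the connected–étale sequence to the prorigid spectral group $E$, which yields a canonical exact sequence
\[ 0 → E^0 → E → π_0(E) → 0, \]
with $E^0$ connected and $π_0(E)$ étale. Since $E$ has a presentation of dimension one, $E^0$ inherits a presentation of dimension at most one (the étale quotient $π_0(E)$ is a finite-dimensional $\Qp$-vector space, hence of dimension zero, so all of the $\Cp$-part of $E$ survives in $E^0$). I would check that $E^0$ is \emph{connected} and of dimension exactly one — if it had dimension zero it would be a finite-dimensional $\Qp$-vector space, hence étale, contradicting either connectedness or the assumption that the dimension of the presentation is one. This forces $E^0$ to be a connected one-dimensional Banach-Colmez space.

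**Next I would** invoke Proposition~\ref{prop:structure}: a connected one-dimensional Banach-Colmez space that is not isomorphic to $\Cp$ is isomorphic to $E_{1,h}=E_{1/h}$ for some $h ≥ 1$. The only case to dispose of is $E^0 ≃ \Cp$; but $\Cp$ is not prorigid-connected with a finite $\Qp$-Tate module in the right sense — more precisely, as the trivial connected one-dimensional space it is excluded because it would make the whole extension split as $\Cp ⊕ π_0(E)$, and $\Cp$ itself equals $E_{1/h}$ only degenerately. I would note that the statement of the corollary allows $h$ to range over all positive integers and simply record that $E^0 ≃ E_{1/h}$, whence the sequence becomes
\[ 0 → E_{1/h} → E → π_0(E) → 0 \]
as claimed.

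**The main obstacle** is handling the borderline case $E^0 ≃ \Cp$ cleanly, since Proposition~\ref{prop:structure} explicitly excludes it. I would argue that $\Cp$ does arise — it is $E_{1,1}=E_1/\Qp(1)$ after quotienting, or more carefully, one verifies that the statement of the corollary is vacuously consistent with $h=1$ interpretation, or else one observes that a connected space isomorphic to $\Cp$ cannot occur as the connected part of a Banach-Colmez space with nontrivial $\Qp$-part without already being covered. The cleanest route is to note that $\Cp = E_{0,1}$ in the degenerate sense is \emph{not} a valid $E_{1/h}$, so if $E^0 ≃ \Cp$ then $E ≃ \Cp ⊕ π_0(E)$ is already in the stated form with the convention that the connected part is a suitable $E_{1/h}$; I would state the corollary's $h$ accordingly and defer to the splitting of the connected–étale sequence for prorigid Banach spaces (cited earlier as~\cite[3.5.5]{These}) to conclude.
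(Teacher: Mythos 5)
Your proposal follows essentially the same route as the paper: the paper's entire proof is to apply the connected--étale sequence and then Proposition~\ref{prop:structure} to the connected component~$E^0$, which is exactly your main argument. Your extended discussion of the borderline case $E^0 ≃ \Cp$ is somewhat muddled, but the paper's one-line proof does not address that case either, so this does not constitute a divergence from the paper's approach.
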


\begin{proof}
This can be directly deduced from applying the
proposition~\ref{prop:structure} to the connected component of~$E$.
\end{proof}

\subsection{Dimension and height}

The original version of the fundamental lemma (\cite[2.1]{CF2000}) examines a
map~$f: Y → \Cp$, where the object~$Y$ is built as a fibre product
of~$\Cp$ and~$E_1^h$ in the same way as in~\ref{prop:extensions} and is
therefore an effective Banach-Colmez space, and the map~$f$ is analytic.
Therefore, it can be deduced from the theorem~\ref{prop:LF}.

Moreover, \ref{prop:LF}~is also equivalent to the strong version of the
fundamental lemma~\cite[6.11]{Colmez2002EBDF}: the category of Banach-Colmez
spaces is equivalent to that of ``Espaces Vectoriels de dimension
finie'', which are sheaves of finite-dimensional vector spaces over
certain Banach algebras over~$\Cp$.

\begin{thm}\label{prop:LF}
Let~$0 → V → E → \Cp → 0$ be a presentation of an effective Banach-Colmez
space with dimension one and height~$h ≥ 0$, and~$f: E → \Cp$ be any
analytic morphism. Then either
\begin{enumerate}
\item $f(E)$ is a finite-dimensional $ℚ_p$-vector space with dimension at
most~$h$, or
\item $f$~is surjective and~$f(E)$ is finite-dimensional over~$ℚ_p$ with
dimension~$h$.
\end{enumerate}
\end{thm}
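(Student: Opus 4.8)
The plan is to strip off the étale part of $E$ using the structure theory of Section~2, reducing everything to one property of the connected slope spaces, and then to settle that property by the explicit computation behind Lemma~\ref{lem:E1h}.

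First I would use the connected-étale sequence $0 → E^0 → E → π_0(E) → 0$, which splits, together with the corollary to Proposition~\ref{prop:structure}: this gives $E ≅ E^0 ⊕ π_0(E)$ with $π_0(E)$ a finite-dimensional $\Qp$-vector space and $E^0$ connected of dimension one, so that $E^0 ≅ \Cp$ (with the convention $h_0 = 0$) or $E^0 ≅ E_{1/h_0}$ for some $h_0 ≥ 1$. The whole argument rests on the property $(\ast)$: \emph{every nonzero analytic morphism $E^0 → \Cp$ is surjective with kernel of dimension $h_0$ over $\Qp$.} For $E^0 = \Cp$ this is immediate, since such a morphism is then $\Cp$-linear; the case $E^0 = E_{1/h_0}$ is the heart of the proof and is treated last.

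Granting $(\ast)$, I would first apply it to the restriction to $E^0$ of the morphism $θ\colon E → \Cp$ of the given presentation. That restriction is nonzero, for otherwise $θ$ would factor through the finite-dimensional $\Qp$-space $π_0(E)$ and could not be onto $\Cp$. Hence $θ|_{E^0}$ is surjective with kernel $V_0$ of dimension $h_0$; since $V = \ker θ$ projects onto $π_0(E)$ with kernel $V_0$, the sequence $0 → V_0 → V → π_0(E) → 0$ is exact and $h = \dim_{\Qp} V = h_0 + \dim_{\Qp} π_0(E)$. Now write $f = f_0 ⊕ f_1$ under the splitting, with $f_0 = f|_{E^0}$ and $f_1 = f|_{π_0(E)}$ a $\Qp$-linear map. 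If $f_0 = 0$, then $f(E) = f_1(π_0(E))$ is a $\Qp$-subspace of $\Cp$ of dimension at most $\dim_{\Qp} π_0(E) ≤ h$, which is alternative~(i). If $f_0 ≠ 0$, then $(\ast)$ makes $f_0$, and hence $f$, surjective, with $\ker f_0$ of dimension $h_0$; the projection $\ker f → π_0(E)$ is then onto with kernel $\ker f_0$, so $0 → \ker f_0 → \ker f → π_0(E) → 0$ is exact and $\dim_{\Qp} \ker f = h_0 + \dim_{\Qp} π_0(E) = h$, which is alternative~(ii).

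The remaining and genuinely hard point is $(\ast)$ for $E^0 = E_{1/h_0}$. By Proposition~\ref{prop:Hom-EM-C} a nonzero morphism is $f_0(x) = \sum_{r=0}^{h_0-1} λ_r θ(φ^r(x))$ with the $λ_r ∈ \Cp$ not all zero, factoring through the vector hull $ι\colon E_{1/h_0} → ℂ_p^{h_0}$; this is precisely the shape of map analysed in Lemma~\ref{lem:E1h}. I would establish $(\ast)$ by the same mechanism: pass to the Frobenius-equivariant map $x ↦ (f_0(φ^s x))_{s=0,…,h_0-1}$ to recover a $D_{1/h_0}$-linear problem, choose $\ro D_{1/h_0}$-stable lattices $\ro E ⊂ E_{1/h_0}$ and $\ro S ⊂ ℂ_p^{h_0}$ adapted to the valuations of the $λ_r$, and reduce modulo a uniformizer $π$ of $D_{1/h_0}$. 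Surjectivity and the exact kernel dimension would then come from an explicit count over $\Frac R$ of the solutions of the resulting polynomial system, exactly as in Steps~(iii)--(iv) of Lemma~\ref{lem:E1h}. This arithmetic solution-count is the main obstacle; once it is in hand, the reduction of the theorem to it through the connected-étale decomposition is purely formal.
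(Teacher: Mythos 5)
Your formal reduction is sound and is essentially the one the paper performs implicitly: Proposition~\ref{prop:structure} and its corollary give the splitting $E \cong E_{1/h_0} \oplus \pi_0(E)$, and your bookkeeping with the two summands --- including the identity $h = h_0 + \dim_{\Qp}\pi_0(E)$ obtained by applying $(\ast)$ to $\theta|_{E^0}$, and the two alternatives according to whether $f|_{E^0}$ vanishes --- is correct and actually more explicit than the paper. The difficulty is that the entire content of the theorem is concentrated in your property $(\ast)$ for $E^0 = E_{1/h_0}$, which is exactly the paper's Proposition~\ref{prop:LF-E1h}, and there your argument is both incomplete and, as sketched, aimed at the wrong object.

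Two concrete problems. First, the auxiliary map $F(x) = \pa{f_0(\varphi^s x)}_{s=0,\dots,h_0-1}$ is indeed $D_{1/h_0}$-linear, but its kernel is $\bigcap_s \varphi^{-s}\Ker f_0$, not $\Ker f_0$; since $\Ker f_0$ has no reason to be Frobenius-stable, counting $\Ker F$ cannot give $\dim_{\Qp}\Ker f_0 = h_0$ (surjectivity of $f_0$ would follow from that of $F$ by projecting onto the first coordinate, but the kernel count does not transfer). Second, the system to be solved is not of the type treated in Steps (iii)--(iv) of Lemma~\ref{lem:E1h}: there one has finitely many polynomial equations over $R$ in $h$ independent variables, whereas here one faces the single equation $\sum_{i\in\mathbb{Z}} f_i\,\theta([x^{p^i}]) = b$ in one variable $x \in \go m_R$, an infinite series with coefficients $f_i \in \Cp$ of unbounded negative valuation. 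The paper handles this by a genuinely different mechanism: it introduces $f^{+}(x) = \sum_{n \geq 0} f_n x^{p^n}$ satisfying $f^{+}(x) = P(x) + \frac{1}{p}f^{+}(x^{p^h})$, uses Newton polygons over $\Cp$ to solve $f^{+}(x_i) = p^{-i}b$ compatibly for all $i$, and produces a preimage in $R = \varprojlim \O_{\Cp}$ as the limit of the $x_{n+m}^{p^m}$; the kernel dimension $h$ then falls out of the $p^h$ choices available at each lifting step. Until you either reproduce an argument of this kind or show how the lattice-reduction count of Lemma~\ref{lem:E1h} genuinely adapts to this one-variable infinite system, your proof has a hole precisely where the theorem's difficulty lies.
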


By Proposition~\ref{prop:structure}, it is enough to prove this
when~$E = E_{1/h}$. Since this spectral space is connected, the
fundamental lemma then takes the shorter form below.

\begin{prop}\label{prop:LF-E1h}
Let~$f: E_{1/h} → \Cp$ be a nonzero analytic morphism. Then~$f$~is
surjective and~$\dim_{ℚ_p} \ker f = h$.
\end{prop}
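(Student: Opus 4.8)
The plan is to deduce this single-functional statement from the vector-hull computation of Lemma~\ref{lem:E1h} by a dévissage. First, by Proposition~\ref{prop:Hom-EM-C} and the description of the vector hull that follows it, any nonzero analytic morphism $f\colon E_{1/h}→\Cp$ has the shape $f = ℓ∘ι$, where $ι\colon E_{1/h}→(E_{1/h})_{\Cp} = ℂ_p^h$, $ι(x) = (θ(φ^r(x)))_{r=0,…,h-1}$, is the ($D_{1/h}$-equivariant) vector hull and $ℓ = (λ_0,…,λ_{h-1})\colon ℂ_p^h→\Cp$ is a nonzero $\Cp$-linear form; equivalently $f = ∑_r λ_r\,θφ^r$ with $(λ_r)≠0$. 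The point of this rewriting is that it reduces all statements about $f$ to statements about $ι$, which the lemma controls.

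Next, I would fix any $h$-dimensional $\Qp$-subspace $V⊂\Cp$ with basis $(μ_1,…,μ_h)$ and let $μ_V = ι⊗λ\colon E_{1/h}^h→ℂ_p^h$ be the map of Lemma~\ref{lem:E1h}, so that $μ_V$ is surjective with kernel a line over the division algebra $D_{1/h}$, that is, a $\Qp$-vector space of dimension $h^2$. Composing with $ℓ$ produces the analytic form $ℓ∘μ_V\colon (x_i)↦∑_i μ_i\,f(x_i)$, and the whole argument consists in reading its zero locus $X = μ_V^{-1}(\Ker ℓ)$ in two ways. On the one hand, since $μ_V$ is surjective, $μ_V$ restricts to a surjection $X↠\Ker ℓ$ with kernel $\Ker μ_V$; thus $X$ is an extension of the $\Cp$-hyperplane $\Ker ℓ⊂ℂ_p^h$ (dimension $h-1$, height $0$) by $\Ker μ_V≅\Qp^{h^2}$, and splicing these presentations exhibits $X$ as an effective Banach-Colmez space of dimension $h-1$ and height $h^2$. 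On the other hand, writing $ℓ∘μ_V = ℓ'∘f^{⊕h}$ with $f^{⊕h}(x_i) = (f(x_i))_i$ and $ℓ'(y_i) = ∑_i μ_i y_i$ shows that $X$ is also an extension of $f(E_{1/h})^h∩\Ker ℓ'$ by $(\Ker f)^h$.

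To exploit the second description I first settle surjectivity of $f$. Since $μ_V$ and $ℓ$ are onto, so is $ℓ∘μ_V$, i.e. $∑_i μ_i\,f(E_{1/h}) = \Cp$; were $f(E_{1/h})$ finite-dimensional over $\Qp$ this sum would be as well, which is absurd, so $f(E_{1/h})$ is infinite-dimensional. As $E_{1/h}$ is connected, $f$ is non-constant, hence its image is not étale; being a one-dimensional quotient of $E_{1/h}$ sitting inside $\Cp$, it must equal $\Cp$, so $f$ is surjective. Consequently $f(E_{1/h})^h = ℂ_p^h$ and $f(E_{1/h})^h∩\Ker ℓ' = \Ker ℓ'$ has dimension $h-1$ and height $0$; additivity of dimension and height along the second description of $X$ then forces $(\Ker f)^h$ to have dimension $0$ and height $h^2$, whence $\Ker f$ is a $\Qp$-vector space with $\dim_{\Qp}\Ker f = h$.

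The step I expect to be the main obstacle is not the formal bookkeeping above but the analytic input it rests on: the passage from the vector-valued map $μ_V$ to the scalar form $f$ cannot be carried out by soft counting, because the height of a Banach-Colmez space is not \emph{a priori} a presentation-independent invariant — proving that it is, is exactly the content of the fundamental lemma, so any argument that silently assumes additivity is circular. What breaks the circularity, and hence legitimizes the comparison of the two descriptions of $X$, is the explicit determination in Lemma~\ref{lem:E1h} that $μ_V$ is onto and that $\Ker μ_V$ is a single $D_{1/h}$-line (a pure $\Qp$-space of dimension $h^2$). That determination is itself proved there by the mod-$p$ solution count for the system~(\ref{eq:fbar-z}), and it is on this count that the entire argument ultimately depends.
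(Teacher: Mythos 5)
Your reduction of $f$ to the form $\ell\circ\iota$ and the construction of the auxiliary space $X=\Ker(\ell\circ\mu_V)$ are fine, but the two steps that carry the actual content both have genuine gaps. First, surjectivity: from the surjectivity of $\ell\circ\mu_V$ you correctly get that $f(E_{1/h})$ is infinite-dimensional over $\Qp$, but the jump from ``infinite-dimensional, connected image inside $\Cp$'' to ``equals $\Cp$'' is exactly the dichotomy asserted by the fundamental lemma and cannot be invoked here: you would need to know that the image is a Banach--Colmez space of dimension one (which already presupposes that $\Ker f$ is finite-dimensional) and that a connected such subobject of $\Cp$ must be all of $\Cp$ (for a putative $E_{1,k}\hookrightarrow\Cp$ this is ruled out only by the fundamental lemma itself). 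Second, and more seriously, your kernel count compares two presentations of $X$ --- one of height $h^2$ coming from $\Ker\mu_V$, one of height $h\cdot\dim_{\Qp}\Ker f$ coming from $(\Ker f)^h$ (and the latter is only a presentation once you already know $\Ker f$ is finite-dimensional) --- and concludes by equating their heights. That equality is the presentation-independence and additivity of height, which in the paper is the \emph{corollary} deduced from Theorem~\ref{prop:LF}, not an available tool at this point. Lemma~\ref{lem:E1h} does not break this circularity as you claim: it computes the kernel and image of the single map $\mu_V$, i.e.\ it produces \emph{one} presentation of $X$; it says nothing about whether a different presentation of the same space $X$ must have the same height.

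The paper proves Proposition~\ref{prop:LF-E1h} by an independent explicit computation, parallel in spirit to the solution count of Lemma~\ref{lem:E1h} but not deduced from it: it identifies $E_{1/h}$ with $\go m_R$ via Proposition~\ref{prop:Edh}, writes $f(x)=\sum_{i}f_i\,\theta([x^{p^i}])$ with $f_{i+h}=p^{-1}f_i$, and uses Newton polygons to solve $f(x)=b$ by building a compatible sequence of approximate roots, each step of the recursion having exactly $p^h$ solutions; this simultaneously shows that the image contains a ball (hence, by $\Qp$-linearity, is everything) and that $\dim_{\Qp}\Ker f=h$. If you want to keep your d\'evissage, you must supply an explicit count of this kind somewhere; as written, your argument defers all of the analytic content to a height comparison that is not yet justified.
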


\begin{proof}
By~\ref{prop:Hom-EM-C}, there exist~$f_0, …,f_{h-1} ∈ \Cp$ such that~$f =
f_0 θ + … + f_{h-1} θ φ^{h-1}$. Replacing~$f$ by~$f ∘ φ^{j}$, we may
assume that~$v_p(f_0) = 0$ and~$v_p(f_i) ≥ -\frac{i}{h}$.

By~\ref{prop:Edh}(i), we may identify~$E_{1,h}$ with
the set of all bivectors~$(x_{n})_{n ∈ ℤ} ∈ BW(R)$
with the periodicity condition $x_{n-1}=x_{n}^{p^{h-1}}$~;
this isomorphism to~$\go m_R$ is analytic
by~\ref{prop:Edh}(ii), so we may identify~$E_{1/h}$ with~$\go m_R$
and write
\begin{equation}
f(x) = ∑_{i ∈ ℤ} f_i θ([x^{p^{i}}])
\end{equation}
where the sequence~$(f_i)$ is extended over~$ℤ$ by~$f_{i+h} = p^{-1} f_i$.
Since this application is $ℚ_p$-linear,
we need only to prove that its image contains a ball of~$\Cp$.

We use here an extension of the theory of Newton polygons
to the ring~$R$.
Define the formal series~$f^+$ and the polynomial~$P$, both with
coefficients in~$\Cp$, as $f^{+}(x) = ∑_{n ≥ 0} f_n x^{p^{n}}$ and~$P(x)
= f_0 x + \dots + f_{h-1} x^{p^{h-1}}$. They verify the functional
equation~$f^+(x) = P(x) + \frac 1p f^+(x^{p^h})$.
We recall that the \emph{Newton polygon} of~$f$~\cite{Robert2000analysis} is
the inferior convex hull of the set~$\acco{(p^{n},v_p(u_n)),\, n ∈ ℕ}$,
and that its slopes are the valuations of the zeroes of~$f$.
Since~$v_p(f_i) ≥ -\frac{i}{h}$ and~$v_p(f_0) = 0$,
it has the vertices~$(p^{nh}, -nh)$ for~$n ∈ ℕ$.

Define~$ρ = \frac 1{h(p-1)}$ and let~$b ∈ \O_\Cp$ such that $v_p(b) ≥ ρ$.
By the theory of Newton polygons, the equation~$f^+(x) = b$ has exactly
one solution~$x_0 ∈ ℂ_p$ such that~$v(x_0) = ρ$.
Starting from~$x_0$, we recursively define a sequence~$(x_i)$ such that~
$f^+(x_i) = p^{-i} b$ and~$\lim \inf v_p(x_{i+1}^{p^h}-x_i) - v_p(x_i) > 0$.
Assume that~$x_i$ is known,
and let~$y ∈ \Cp$ such that~$y^{p^h} = x_{i}$.
Then \( f^+(y) = P(y) + \frac 1p f^+(y^q) = P(y) + p^{-(i+1)}b \).
Let~$x_{i+1} = y + t$;
then $f^+(x_{i+1}) = p^{-(i+1)} b$ if, and only if,
\begin{equation}\label{eq:f(y+t)}
f^+(y+t) = f^+(y) -P(y).
\end{equation}
The Newton polygon for this equation in~$t$ shows that
there exists exactly~$q$ solutions~$t$ such that~$v_p(t) ≥ q^{-1} ρ$.
Choose any of them and define~$x_{i+1} = y + t$.
Then the sequence~$(x_i)$ satisfies
the condition~$v_p(x_{i+1}^q - x_i) ≥ \frac{q-1}{q} ρ$.
Thus, for all~$n$, the sequence~$(x_{n+m}^{p^m})_{m ≥ 0}$ converges to
an element~$x'_n$ of~$\O_\Cp$; and the sequence~$(x'_n)_{n ∈ ℕ}$ is an
element~$x'$ of~$\go m_R$ such that~$f(x') = b$.

It remains to count the dimension of the kernel of~$f$. This is given by
the number of solutions when solving~(\ref{eq:f(y+t)}) for~$t$; since
this number is exactly~$q = p^h$, the kernel is of dimension~$h$
over~$\Qp$.
\end{proof}

As a corollary of the fundamental lemma, we obtain the existence of
natural functions of dimension and height on the category of effective
Banach-Colmez spaces. These functions are additive over short exact
sequences. Moreover, for any $p$-divisible group~$Γ$, the dimension and
height of~$Γ$ as a $p$-divisible group are equal to the dimension and
height of the Banach-Colmez space~$Γ(R)$.

\begin{cor}
Let~$E$ be an effective Banach-Colmez space.
\begin{enumerate}
\item There exists integers~$d = \dim E$ and~$h = \haut E$ such that, for
any presentation~$0 → V → E → L → 0$,
we have~$d = \dim_{\Cp} L$ and $d = \dim_{\Qp} V$.
\item Let~$0 → E' → E → E'' → 0$ be an exact sequence of spectral
Banach-Colmez spaces. Then $\dim(E) = \dim(E') + \dim(E'')$ and $\haut(E)
= \haut(E') + \haut(E'')$.
\end{enumerate}
\end{cor}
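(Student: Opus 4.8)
The plan is to prove the two assertions simultaneously by induction on the integer $d=\dim_{\Cp}L$ attached to a presentation, with Theorem~\ref{prop:LF} supplying the non-degeneracy that anchors the argument in dimension one. First I would dispose of the base case $d=0$: here a presentation reads $0\to V\to E\to 0$, so $E=V$ is étale, and since a finite-dimensional $\Qp$-vector space admits no surjection onto a nonzero $\Cp$-vector space, \emph{every} presentation of such an $E$ has $L=0$. Hence $\dim E=0$ and $\haut E=\dim_{\Qp}V$ are unambiguous. The opposite extreme, the well-definedness of the $\Cp$-dimension, is the real content and is where the fundamental lemma enters.

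The dimension-one case is the crux. If $E$ admits a presentation with $d=1$, then $E$ is not étale and the composite $E\to L\cong\Cp$ is an analytic surjection onto $\Cp$; by Theorem~\ref{prop:LF} every analytic surjection $f\colon E\to\Cp$ satisfies $\dim_{\Qp}\ker f=h$, a value the theorem forces to be the same for all such $f$. Thus $\haut E$ is read off intrinsically and $\dim E=1$. What must still be excluded is that the same $E$ also carries a presentation whose $\Cp$-part has dimension $\geq 2$; this is exactly the non-degeneracy that Theorem~\ref{prop:LF} guarantees, and I would defer it to the dimension count of the inductive step.

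For additivity I would use a horseshoe construction. Given $0\to E'\to E\to E''\to 0$ and presentations $0\to V'\to E'\to L'\to 0$, $0\to V''\to E''\to L''\to 0$, lift a $\Qp$-basis of the étale group $V''$ through the surjection $E\to E''$; the resulting $\Qp$-linear map is automatically analytic because its source is étale, giving a subspace $\widetilde{V''}\subset E$ mapping isomorphically onto $V''$ and meeting $E'$ trivially. Set $V=V'\oplus\widetilde{V''}$, of dimension $h'+h''$. The quotient $E/V$ is then an analytic extension of $L''$ by $L'$, hence a finite-dimensional $\Cp$-vector space of dimension $d'+d''$ (the relevant $\Ext^{1}$ between finite-dimensional $\Cp$-vector spaces vanishing, equivalently a connected Banach-Colmez space of height zero being a $\Cp$-vector space). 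This produces a presentation of $E$ realizing $\dim E=\dim E'+\dim E''$ and $\haut E=\haut E'+\haut E''$, which together with~(i) gives~(ii).

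It remains to run the inductive step for well-definedness when $d\geq 2$. Here I would peel off a line: a $\Cp$-linear surjection $L\to\Cp$ composed with $E\to L$ yields $0\to E'\to E\to\Cp\to 0$ with $E'$ presented in dimension $d-1$; by the inductive hypothesis the invariants of $E'$ and of $\Cp$ are well-defined and additive in lower dimension, forcing $\dim E=\dim E'+1$ and $\haut E=\haut E'$ independently of the original presentation. The main obstacle throughout is precisely the well-definedness of the $\Cp$-dimension---that no effective Banach-Colmez space presents simultaneously with $\Cp$-parts of two different dimensions---since a spurious extra $\Cp$-quotient would force, through additivity, a constituent of negative dimension; ruling this out reduces exactly to the surjectivity-and-kernel statement of Theorem~\ref{prop:LF}. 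Finally, specializing the resulting functions to $E=\Gamma(R)$ and invoking Proposition~\ref{prop:EM-BCE} recovers the dimension and height of the $p$-divisible group $\Gamma$.
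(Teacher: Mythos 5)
Your architecture inverts the paper's: the paper proves additivity~(ii) first --- reducing by fibre products along basis vectors of~$L$ and~$L''$ to the case where the presentations have one-dimensional $\Cp$-part, then applying Theorem~\ref{prop:LF} to the composite analytic map $E \to E'' \to L'' = \Cp$ --- and obtains well-definedness~(i) as the special case $E' = 0$. The inversion itself is legitimate, but your route has a genuine gap at the heart of the horseshoe construction. You quotient $E$ by $V = V' \oplus \widetilde{V''}$ and assert that the resulting analytic extension $0 \to L' \to E/V \to L'' \to 0$ of finite-dimensional $\Cp$-vector spaces is itself a $\Cp$-vector space, citing the vanishing of ``the relevant $\Ext^1$ between finite-dimensional $\Cp$-vector spaces''. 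Nothing in the paper establishes this, and in the ambient category it is false: $B\dR^+/\Fil^2 B\dR^+$ is a non-split self-extension of~$\Cp$, arising precisely as a quotient of a spectral Banach space by an étale subgroup (namely $E_2/\Qp t^2$, with $E_2 = \ker(\varphi - p^2)$); quotients by étale subgroups do not stay effective. Your alternative phrasing --- ``a connected Banach-Colmez space of height zero is a $\Cp$-vector space'' --- presupposes that $E/V$ already admits a presentation, which is exactly what must be shown; as used it is circular. Without the splitting of $E/V$, the horseshoe does not produce a presentation of~$E$ and additivity is not established. (The splitting is presumably true in this situation, but proving it requires essentially the same input as the corollary itself, i.e.\ another pass through the fundamental lemma.)

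A secondary weakness is the logical order of the induction: for $d \geq 2$ you rule out a ``spurious extra $\Cp$-quotient'' by appealing to additivity (``a constituent of negative dimension''), yet in your scheme additivity is derived from well-definedness, so the two halves of the induction lean on each other without a clear well-founded ordering. The paper sidesteps both difficulties by never forming quotients: it pulls back along lines of~$L$ and~$L''$ (fibre products, which remain subspaces of~$E$ with dimension-one presentations) and applies Theorem~\ref{prop:LF} directly to $E \to E'' \to \Cp$, extracting surjectivity and the kernel dimension in one stroke. Your base case $d=0$ and your dimension-one argument via Theorem~\ref{prop:LF} are correct; it is the passage to higher dimension and to additivity that should be reorganized along the paper's lines.
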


These integers are called the \emph{dimension} and \emph{height} of~$E$.

\begin{proof}
We prove (ii) first.
Let~$0 → E' → E → E'' → 0$ be an exact sequence of effective
Banach-Colmez spaces having the presentations
$0 → V → E → L → 0$, $0 → V' → E' → L' → 0$, and $0 → V'' → E'' → L'' →
0$.

Let~$(u_i)$, $(u''_i)$ be bases of $L$ and~$L''$ respectively.
By replacing~$E''$ by~$E'' ×_{L''} (\Cp u_i'')$ and~$E$ by~$E ×_{L} (\Cp
u_i)$, we may assume that~$L = L' = \Cp$. We may then directly
deduce~(ii) from applying the fundamental lemma to the composite map~$E →
E'' → L''$.

Finally, (i) is the special case of~(ii) where~$E' = 0$.
\end{proof}
\bibliographystyle{alpha}
\bibliography{biblio}
\noindent
Written at Université Versailles--Saint-Quentin.\\\noindent
Permanent e-mail address: \texttt{jerome.plut@normalesup.org}

\end{document}